\providecommand{\U}[1]{\protect\rule{.1in}{.1in}}
\newtheorem{theorem}{Theorem}[section]
\newtheorem{corollary}[theorem]{Corollary}
\newtheorem{definition}[theorem]{Definition}
\newtheorem{example}[theorem]{Example}
\newtheorem{lemma}[theorem]{Lemma}
\newtheorem{proposition}[theorem]{Proposition}
\newtheorem{remark}[theorem]{Remark}
\newcommand{\adj}{\mathop{\rm ad}\nolimits}
\newcommand{\bpsi}{\boldsymbol{\psi}}
\newcommand{\R}{\ensuremath{\mathbb{R}}}
\newcommand{\N}{\ensuremath{\mathbb{N}}}
\newcommand{\Z}{\ensuremath{\mathbb{Z}}}
\begin{document}
\title{An accessibility condition for discrete-time linear systems on Lie groups}
\author{Thiago Matheus Cavalheiro\\Departamento de Matem\'{a}tica, Universidade Estadual de Maring\'{a}\\Maring\'{a}, Brazil.
\and Alexandre J. Santana\\Departamento de Matem\'{a}tica, Universidade Estadual de Maring\'{a}\\ Maring\'a, Brazil
\and Eduardo Celso Viscovini\\Departamento de Matem\'{a}tica, Universidade Estadual de Maring\'{a}\\Maring\'{a}, Brazil }
\maketitle

\begin{abstract} 

In this paper, we characterize the accessibility of discrete-time linear control systems on
Lie groups. Using an exceptional notion of derivative, we construct a subalgebra $\mathfrak{h}$ based
on the infinitesimal automorphism of the system such that if its dimension is maximal,
the system is accessible. Our criteria provide simple conditions  in a general context for the discrete-time case. Additionally, we prove a sufficient
condition for local controllability at the identity using the infinitesimal automorphism,
akin to the ad-rank condition in the continuous case.
\end{abstract}



\section{Introduction} 

A control system, in general, is a set of mathematical concepts that describes the behavior of a dynamic system  on a state space $M$ under the influence of control inputs. One of the most fundamental concepts in the control systems theory is controllability, which means that the entire $M$ can be reached by trajectories (in positive time) of the control system from any starting point. 	For the special case of linear control systems on $\mathbb{R}^{n}$ there exists a simple algebraic tool for deciding controllability, the presently known algebraic necessary and sufficient conditions for nonlinear	systems are still far from complete (see e.g. Elliot \cite{elliot}. Moreover, it is well known that whatever
	necessary and sufficient conditions eventually are
	found for non-linear cases, these are likely to be rather hard to check (see Kawski \cite{kawski} and Sontag \cite{sontag harder}). However, note that there are several important partial results for systems with
	some specific classes of state spaces, for systems restrict to some regions of the state space, as well as results about weaker controllability concepts (see Ayala and Da Silva \cite{AyalaandDaSilva2}, Colonius and Kliemann \cite{FritzKliemann}, Sontag \cite{Son98} and references therein). Two examples of these weaker controllability concepts are the local controllability, that is, the system can be steered from an initial state to any neighboring point and back, and 
	accessibility condition, which means that the system can be steered from an initial state to some full-dimensional final set our equivalently, the set of reachable points (from any initial state) has non-empty interior (see Dıaz-Seoane et al \cite{biosystems} and Sontag \cite{Son98}). Recall the following fundamental relation between accessibility and controllability: if
	the system is controllable, the trajectories are all open, implying accessibility;  On the other
	hand, if the system is only accessible, the controllability may or may not happen. 
	
	 In case of continuous-time linear control systems on Lie group several works in the 2000s began to be published studying conditions for accessibility and controllability  (see e.g.  Ayala and Tirao \cite{AyalaAndTirao}, Do Rocio et all \cite{DoRocioSantanaAndVerdi}, Ayala and Da Silva \cite{ayalaeadriano} and 	\cite{AyalaandDaSilva2}, Da Silva
	\cite{adriano}, Jouan
	\cite{jouan} and Ayala and San Martin
	\cite{SanMartinandAyala}). 
 
 A classical concept related with controllability is the Lie algebra rank condition - LARC: consider the continuous-time linear systems on a Lie group $G$
\begin{equation}\label{linearcontinuoustimeonG}
    \dot{x}(t) = \mathcal{X}(x(t)) + \sum_{j=1}^{m} X_j(x(t))u_j(t),
\end{equation}
where $u_j(t)$ are piecewise constant functions with image on some compact neighborhood of $0 \in \mathbb{R}^m$, $\mathcal{X}$ is a vector field on $G$ contained in the normalizer of its Lie algebra $\mathfrak{g}$ and $X_j$ are right-invariant vector fields. Denote the solution of the system  starting at $x \in G$ by $\varphi(t,x,u)$.
The reachable set from  identity $e \in G$ at time $t > 0$ and the reachable set from  $e$ are given, respectively, by
$
\mathcal{R} _t(e)$ $ :=$ $ \{\varphi(t,e,u) \mid u=(u_{1} , \ldots , u_{m}) \}$ and $\mathcal{R}$ $  :=$ $ \bigcup_{t>0} \mathcal{R} _t(e).
$

Consider the derivation $\mathcal{D}(X) = [\mathcal{X},X]$ of $\mathfrak{g}$ and denote $\mathfrak{h}$ as the Lie subalgebra of $\mathfrak{g}$ generated by $\{X_j : j = 1, \ldots, m\}$. According \cite{adriano1}, We say that the system satisfies LARC if  $\mathfrak{g}$ is the smallest $\mathcal{D}$-invariant subalgebra containing $\mathfrak{h}$, which implies  the accessibility condition of system (\ref{linearcontinuoustimeonG}).
There are several results which can be obtained from this context. For example, San Martin and Ayala \cite{SanMartinandAyala} proved that for compact connected Lie groups, the system is controllable if, and only if, the LARC condition is satisfied. This also holds for abelian Lie groups \cite[Corollary 3.6]{AyalaAndTirao}.  For three-dimensional solvable nonnilpotent Lie groups, Ayala and Da Silva \cite{AyalaandDaSilva2} proved that the LARC condition and the study of the eigenvalues of the derivation $\mathcal{D}$ are essential for controllability on those systems as well.

In the matter of local controllability at $e \in G$  (which is equivalent to say $e \in \hbox{int}\mathcal{R}$), we highlight two well-known results that give sufficient conditions  (ad-rank condition) for local controllability in $e$. The unconstrained case were studied by Jouan \cite{jouan} and we will not discuss it here. First, recall that system (\ref{linearcontinuoustimeonG}) is said to satisfy the ad-rank condition if the vector subspace $V = span\{\mathcal{D}^j(X_i(e)): i = 1,\ldots,m, j \in \N_0\}$ coincides with $\mathfrak{g}$ (see \cite{adriano}). Nevertheless, \cite[Theorem 3.5]{AyalaAndTirao} ensures that if the system (\ref{linearcontinuoustimeonG}) satisfies the ad-rank condition, then it is locally controllable. There are several others interesting works relating ad-rank condition with controllability. We recall that Do Rocio, Santana, and Verdi \cite{DoRocioSantanaAndVerdi} compare the ad-rank condition on the constrained case with some other classical results about local controllability.  We cite also Da Silva \cite{adriano} and Ayala and Da Silva \cite{ayalaeadriano} that proved that the ad-rank condition and the study of the eigenvalues of the derivation $\mathcal{D}$ are essential for global controllability. 
 
More recently, Colonius, Cossich and Santana \cite{CCS1} introduced a  discrete-time version of the above system. And Cavalheiro, Cossich and Santana studied controllability for these discrete systems in \cite{TAJ1} and \cite{TAJ}.

The discrete-time linear control system presented in \cite{CCS1} is a special case of the following family of dynamic systems  on a manifold $M$:
	\begin{equation} \label{generaldiscrete}
		x_{k+1}=f(u_k,x_k), k \in {\mathbb N}_{0}={\mathbb N} \cup \{0\}, 
	\end{equation}
	for which $f: U \times M \longrightarrow M$ is a function defined on a non-empty compact neighborhood $U$ of $0$ in $\R^m$. Now consider the particular case of system (\ref{generaldiscrete}), the linear control system on $\mathbb{R}^{n}$,
	\begin{equation}
		\label{discreteLinearSystemInRn}
		x_{k+1}=Ax_k+Bu_k,
	\end{equation}
	where $A\in{\rm Gl}(n,\mathbb{R})$, $B\in \mathbb{R}^{n\times m}$ and $U\subset\mathbb{R}^m$ is a compact convex neighborhood of the origin. Then 	the condition for controllability is well-known and the main result proved by Kalman, Ho, and Narendra \cite{KalmanHoNarendra} states that the controllability of the system (\ref{discreteLinearSystemInRn}) is equivalent to the image of the matrix  $\mathcal{K} = (A^{n-1}B \hbox{ }... \hbox{ }AB \hbox{ }B)$ has dimension $n$. This condition is called \textit{Kalman condition} and is also valid for the continuous-time version on $\mathbb{R}^{n}$.

	In this present paper, we work with   a special case of the system (\ref{generaldiscrete}), the  discrete-time linear  systems on a connected Lie group $G$, defined  in \cite{CCS1} as:
	\begin{equation}
		\label{discreteLinearSystem}
		 g_{k+1} = f(u_k,g_k), k \in \N_0,
	\end{equation}
	where $f: U \times G \longrightarrow G$ must satisfy the following properties 
	\begin{itemize}
		\item[1-] $f(u,g) = f(u,e) f(0,g)$, for every $g \in G$ and $u \in U$, 
		\item[2-] The function $f_0: = f(0,\cdot): G \longrightarrow G$ is an automorphism.  
	\end{itemize}
	
Note that this system is a discrete-time version of the continuous-time linear control systems on a Lie group $G$ (see e.g. \cite{AyalaandDaSilva2} and references therein) and generalizes the  system (\ref{discreteLinearSystemInRn}). In the continuous-time case, accessibility suffices to ensure the existence of subsets with non-empty interior and approximate controllability, namely control sets (Da Silva and Rojas  \cite{adriano1}).  For the system (\ref{discreteLinearSystem}), more conditions are needed to guarantee their existence (see Cavalheiro, Cossich and Santana \cite{TAJ}).

A result characterizing the accessibility of discrete control systems in a more general setting was proven by Jakubcyk and Sontag \cite{JakubczykAndSontag}, in terms of the partial derivatives of the function $f$. In this paper, we present two main results. One of them characterizes the accessibility of the system in terms of either the smallest subgroup containing the set $f(U,e)$ or the derivatives of the function $u \mapsto f(u,e)$ (see Theorem 3.9). The second result concerns local controllability at $e$ and consists of a result similar to the ad-rank condition in the constrained case, although for discrete-time linear systems. To state and prove both results, we define a new notion of derivative for functions between Lie groups, using right-invariant vector fields as a reference. This derivative is well-behaved with respect to the functions that are usually considered in the context of Lie groups, such as left and right translations and group homomorphisms (see Examples 3.6 and 3.7), and it greatly simplifies the calculations. The advantage of our results is that we are able to characterize accessibility using simpler conditions more in line with Lie theory.

	
This paper is divided as follows: in the section \ref{prelimi} we expose the first concepts about control systems, constructing the main definitions and concepts as well. In the section \ref{access} we show the main results, initially showing some basic properties about linear systems on Lie groups, followed by the definition of the derivative $\hat d$ and some of its properties, and then the theorems about accessibility and the ad-rank condition, finalizing with some examples on the Heisenberg group, the special matrix Lie subgroup $\hbox{SL}(2,\R)$ and the affine two-dimensional Lie group $\hbox{Aff}_2(\R)$.       

\section{Preliminaries}\label{prelimi}

In this section, we define some properties of control systems and establish standartized terminology to be employed throughout this paper. In the sequence, in the discrete-time control system (\ref{generaldiscrete}) we consider    $M$ a smooth $n-$dimensional Riemannian manifold equipped with a canonical metric $d$ and the control range satisfying $U \subset \overline{\hbox{int}U}$.   We assume that for an open set $\hat{U}$ containing $U$, the function $f: \hat{U} \times M \longrightarrow M$ is a $\mathcal{C}^{\infty}$ map as well. Also, we will consider that $f_u: M \longrightarrow M$ is a invertible map, that is, the function $f^{-1}_u: M \longrightarrow M$ is well-defined.  

Given an initial condition $x \in M$, considering the set $\mathcal{U} = \prod_{i \in \Z} U$, the solution $\varphi: \Z \times M \times \mathcal{U} \longrightarrow M$ of the system (\ref{generaldiscrete}) is given by
\begin{equation*}
    \varphi(k,x_0,u) = 
    \left\{
    \begin{array}{cc}   
        f_{u_{k-1}} \circ ... \circ f_{u_0}(x_0),& k > 0\\
        x_0,& k=0\\
        f_{u_{k}}^{-1} \circ ... \circ f_{u_{-1}}^{-1}(x_0),& k < 0
    \end{array}
    \right.
\end{equation*}

Also, considering the map $\Theta: \Z \times \mathcal{U} \longrightarrow \mathcal{U}$, defined by $\Theta_k((u_i)_{i \in \Z}) = (u_{i+k})_{i \in \Z}$, the solution $\varphi$ satisfies the cocycles property, which means that
$\varphi(k+t, x,u) =$ $\varphi(k,\varphi(t,x,u), \Theta_t(u))=$ $\varphi(t,\varphi(k,x,u), \Theta_k(u)), \forall k,t \in \Z$.
The solution $\varphi$ also satisfy the following property: if $ts > 0$ in $\Z$, given $u,v \in \mathcal{U}$, there is a $w \in \mathcal{U}$ such that 
$    \varphi(t,\varphi(s,g,u),v) = $ $\varphi(t + s, g, w), \forall g \in M$. 
The shift space $\mathcal{U}$ is compact using the canonical topology.

Now we present a sequence of definitions and concepts necessary in this paper.

\begin{definition}For $x \in M$, the set of points reachable and controllable from $x$ up to time $k > 0$ in $\mathbb{N}$ are
\begin{eqnarray*}
    \mathcal{R}_k(x) = \{y \in M: \hbox{ there is }u \in \mathcal{U} \hbox{ with }\varphi(k,x,u)=  y\}\\
    \mathcal{C}_k(x) = \{y \in M: \hbox{ there is }u \in \mathcal{U} \hbox{ with }\varphi(k,y,u)= x\}\\
\end{eqnarray*}
The sets $\mathcal{R}(x) = \bigcup_{k \in \mathbb{N}} \mathcal{R}_k(x)$ and $\mathcal{C}(x) = \bigcup_{k \in \mathbb{N}} \mathcal{C}_k(x)$ denote the reachable set and the controllable set from $x$ respectively.  
\end{definition}

\begin{definition}\label{regular}For each $k \in \N$, consider the function $G_k(x,u) = \varphi(k,x,u)$.  A pair $(x,u) \in M \times \hbox{int}U^k$ is called regular if $\hbox{rank}\left[\frac{\partial}{\partial u} G_k(x,u)\right] = \hbox{dim}M.$ We denote by 
\begin{equation*}
    \hat{\mathcal{R}}_k(x) = \{\varphi(k,x,u): (x,u) \in M\times \hbox{int}U^k \hbox{ is regular}\}.
\end{equation*}
the regular reachable set of $x \in G$ up to time $k \in \N$ and $\hat{\mathcal{R}}(x) = \bigcup_{k \in \N} \hat{\mathcal{R}}_k(x)$ the regular reachable set of $x \in M$. 
\end{definition}

In particular, the set $\hat{\mathcal{R}}(x)$ is open, for every $x \in M$. From this we have the following definition.
\begin{definition}
The system (\ref{generaldiscrete}) is forward accessible (resp. backward accessible) if $\hbox{int}\mathcal{R}(x) \neq \emptyset$ (resp. $\hbox{int}\mathcal{C}(x) \neq \emptyset$), for all $x \in M$. And it is said to be accessible if both conditions are satisfied.
\end{definition}

Another important definition in this work is the ad-rank condition for the system (\ref{generaldiscrete}), but as it is necessary some construction to introduce it  we present this definition later (see the following Definition \ref{adrankdef}).

Controllability is the most desirable property for control systems.
\begin{definition}
 We say that the system (\ref{generaldiscrete}) is controllable if   for any $x,y \in M$, there are $k \in \N$ and $u \in \mathcal{U}$ such that $\varphi(k,x,u) = y$.
\end{definition}
 When the controllability is not achievable, we can search for the maximal regions where controllability-like properties may occur, as in the next definition. 
\begin{definition}Consider the control system (\ref{generaldiscrete}). A control set $D$ is a set satisfying the following properties 
\begin{itemize}
    \item[1-] For any $x \in D$, there is a $u \in \mathcal{U}$ such that $\varphi(k,x,u) \in D$, for any $k \in \N$. 
    \item[2-] $D \subset \overline{\mathcal{R}(x)}$, for any $x \in D$. 
    \item[3-] $D$ is maximal with such properties.  
\end{itemize}
\end{definition}

Control sets has a very important role in control systems (see e.g. \cite{FritzKliemann}) and specially in linear control systems in continuous-time (see e.g. \cite{AyalaandDaSilva3}). 

Now consider the discrete-time linear system (\ref{discreteLinearSystem}).
Note that its map $f$ can be defined using the translations of $G$. In fact, given $u \in U$, as $f_u(e) \in G$, we can write $f_u(g)$ as 
\begin{equation}\label{transl1}
    f_u(g) = f_u(e)f_0(g) = L_{f_u(e)}(f_0(g)), 
\end{equation}
where $L_{f_u(e)}$ is the left translation by the element $f_u(e)$. Considering the expression above, the inverse of $f_u$ is given by 
\begin{equation}
    (f_u)^{-1}(g) = f_0^{-1}\circ L_{(f_u(e))^{-1}}(g)= f_0^{-1}((f_u(e))^{-1} \cdot g). 
\end{equation}

Then, we can conclude that $f_u$ is a diffeomorphism of $G$, for any $u \in U$. The solutions in any $g \in G$ can also be defined in terms of translations of the solution at the neutral element by the automorphism $f_0$, see the next proposition proved in (Colonius, Cossich and Santana \cite{CCS1}). 

\begin{proposition}\label{prop52} Consider the above discrete-time linear control system defined on a Lie group $G$. Then for all $g \in G$ and $u = (u_i)_{i \in \Z} \in \mathcal{U}$
\begin{equation*}
    \varphi(k,g,u) = \varphi(k,e,u)f_0^k(g). 
\end{equation*}
\end{proposition}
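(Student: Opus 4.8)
The plan is to argue by induction on $k$, exploiting the recursive structure of $\varphi$ together with the two defining properties of $f$, namely the factorization $f_u(h) = f_u(e)\,f_0(h)$ and the fact that $f_0$ is an automorphism. The base case $k=0$ is immediate: from the definition $\varphi(0,\cdot,u)=\mathrm{id}$ one has $\varphi(0,g,u)=g$, while the right-hand side equals $\varphi(0,e,u)\,f_0^0(g)=e\cdot g=g$. The two defining properties, stated in equations (\ref{transl1}) and in the preliminaries, are exactly the ingredients that let the induction go through.

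For the forward direction ($k\ge 0$) I would use the one-step recursion $\varphi(k+1,g,u)=f_{u_k}(\varphi(k,g,u))$, which follows directly from $\varphi$ being the iterated composition $f_{u_k}\circ\cdots\circ f_{u_0}$. Assuming the formula at step $k$, I substitute to get $f_{u_k}\bigl(\varphi(k,e,u)\,f_0^k(g)\bigr)$, then apply property 1 to rewrite this as $f_{u_k}(e)\,f_0\bigl(\varphi(k,e,u)\,f_0^k(g)\bigr)$. The key move is to invoke that $f_0$ is a homomorphism to split the product as $f_{u_k}(e)\,f_0(\varphi(k,e,u))\cdot f_0^{k+1}(g)$. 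Recognizing the first two factors as $\varphi(k+1,e,u)=f_{u_k}(\varphi(k,e,u))$ closes the induction at step $k+1$.

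For the backward direction ($k<0$) I would run the analogous downward induction based on $\varphi(k-1,g,u)=f_{u_{k-1}}^{-1}(\varphi(k,g,u))$, combined with the inverse formula $(f_u)^{-1}(h)=f_0^{-1}\bigl((f_u(e))^{-1}h\bigr)$ derived in the preliminaries. Substituting the inductive hypothesis and using that $f_0^{-1}$ is again an automorphism, it distributes over the product $(f_{u_{k-1}}(e))^{-1}\varphi(k,e,u)\,f_0^k(g)$ and sends $f_0^k(g)$ to $f_0^{k-1}(g)$; the remaining factor is precisely $\varphi(k-1,e,u)=f_{u_{k-1}}^{-1}(\varphi(k,e,u))$, which yields the claim at step $k-1$.

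I do not anticipate a genuine obstacle here: the argument is essentially bookkeeping. The only point requiring care is the consistent use of the homomorphism property of $f_0$ (and of $f_0^{-1}$) to commute the automorphism past the group product $\varphi(k,e,u)\,f_0^k(g)$; everything else is unwinding definitions. As a remark, one could instead phrase the whole argument uniformly by setting $\psi_k(g)=\varphi(k,e,u)^{-1}\varphi(k,g,u)$ and verifying that $\psi_k$ coincides with the automorphism $f_0^k$ for all $k\in\Z$, but the two-directional induction is the most transparent route and matches the piecewise definition of $\varphi$.
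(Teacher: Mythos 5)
Your proof is correct: the upward induction on $k$ using property 1 ($f_u(h)=f_u(e)f_0(h)$) together with the homomorphism property of $f_0$ is exactly the standard argument, and it matches the proof in the cited source \cite{CCS1}, since the paper itself states this proposition without proof and defers to that reference. Your downward induction for $k<0$, via the inverse formula $(f_u)^{-1}(h)=f_0^{-1}\bigl((f_u(e))^{-1}h\bigr)$, is a correct and welcome supplement covering the full range $k\in\Z$ on which $\varphi$ is defined.
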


Take the reversed-time system 
\begin{equation}\label{reversed-time-system}
     g_{k+1} = \hat{f}_{u_k}(g_k), k \in \N_0,
\end{equation}
given by the map $\hat{f}_u(g) = f_u^{-1}(e)f_0^{-1}(g)$, denote by $\mathcal{R}^*_k$ and $\mathcal{C}^*_k$ its reachable and controllable sets up to time $k$ of $e$ of the system (\ref{reversed-time-system}), its is proved in \cite{TAJ} the following result. 

\begin{lemma}\label{setsinvsys}It holds that $\mathcal{R}_k^*=\mathcal{C}_k$ and $\mathcal{R}_k=\mathcal{C}_k^*$, for all $k\in\mathbb{N}$.
\end{lemma}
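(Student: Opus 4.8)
The plan is to reduce the whole statement to a single structural observation: the transition map of the reversed-time system is exactly the inverse of the original transition map, that is, $\hat{f}_u = f_u^{-1}$ for every $u \in U$. To verify this I would start from the inverse formula already recorded above, namely $f_u^{-1}(g) = f_0^{-1}((f_u(e))^{-1} g)$. Evaluating at $g = e$ gives $f_u^{-1}(e) = f_0^{-1}((f_u(e))^{-1})$, and since $f_0^{-1}$ is a group homomorphism,
\[
\hat{f}_u(g) = f_u^{-1}(e)\, f_0^{-1}(g) = f_0^{-1}\big((f_u(e))^{-1}\big)\, f_0^{-1}(g) = f_0^{-1}\big((f_u(e))^{-1} g\big) = f_u^{-1}(g).
\]
This identity is the heart of the argument; everything else is bookkeeping on compositions.

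Next I would write all four sets as explicit compositions at $e$. Writing $\varphi$ and $\varphi^*$ for the solutions of the original and the reversed systems, for $k>0$ we have $\varphi(k,e,u) = f_{u_{k-1}} \circ \cdots \circ f_{u_0}(e)$ and, using $\hat{f}_u = f_u^{-1}$, $\varphi^*(k,e,u) = f_{u_{k-1}}^{-1} \circ \cdots \circ f_{u_0}^{-1}(e)$. Hence
\[
\mathcal{R}_k = \{ f_{u_{k-1}} \circ \cdots \circ f_{u_0}(e) : u \in \mathcal{U}\}, \qquad \mathcal{R}_k^* = \{ f_{u_{k-1}}^{-1} \circ \cdots \circ f_{u_0}^{-1}(e) : u \in \mathcal{U}\}.
\]
For the controllable sets, $y \in \mathcal{C}_k$ holds iff $f_{u_{k-1}} \circ \cdots \circ f_{u_0}(y) = e$ for some $u$, which (each $f_u$ being a diffeomorphism) is equivalent to $y = f_{u_0}^{-1} \circ \cdots \circ f_{u_{k-1}}^{-1}(e)$; symmetrically, $y \in \mathcal{C}_k^*$ iff $y = f_{u_0} \circ \cdots \circ f_{u_{k-1}}(e)$.

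The final step is to match the sets by reversing the control sequence. Comparing the expressions for $\mathcal{C}_k$ and $\mathcal{R}_k^*$, the composition $f_{u_0}^{-1} \circ \cdots \circ f_{u_{k-1}}^{-1}$ appearing in $\mathcal{C}_k$ becomes the composition $f_{v_{k-1}}^{-1} \circ \cdots \circ f_{v_0}^{-1}$ appearing in $\mathcal{R}_k^*$ under the reindexing $v_j = u_{k-1-j}$. Since reversing the first $k$ coordinates is a bijection of $\mathcal{U}$ onto itself (each coordinate ranges independently over $U$), the two sets coincide, giving $\mathcal{C}_k = \mathcal{R}_k^*$; the same reversal applied to $\mathcal{C}_k^*$ and $\mathcal{R}_k$ yields $\mathcal{R}_k = \mathcal{C}_k^*$. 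I expect the only genuine point of care to be the index bookkeeping in this last step, that is, keeping the order of composition straight and confirming that the reversal is truly a bijection of the shift space, whereas the entire conceptual content is carried by the identity $\hat{f}_u = f_u^{-1}$.
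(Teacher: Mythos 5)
Your proof is correct: the key identity $\hat f_u = f_u^{-1}$ follows exactly as you compute (using only that $f_0^{-1}$ is a homomorphism and $f_u^{-1}(e)=f_0^{-1}\bigl(f_u(e)^{-1}\bigr)$), and the reversal $v_j = u_{k-1-j}$ is indeed a bijection of $U^k$, so the four sets match as claimed. Note that the paper does not prove this lemma itself but imports it from \cite{TAJ}; your argument is precisely the standard one such a proof uses, so your write-up is a sound, self-contained substitute for the citation.
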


\section{Conditions for Accessibility}\label{access}

Let us consider a connected Lie group $G$ with Lie algebra $\mathfrak{g}$ and the discrete-time linear system (\ref{discreteLinearSystem})
 where $U$ is a compact convex neighborhood of $0 \in \R^m$. As said before, the function $f_u: G \longrightarrow G$ is a diffeomorphism for any $u \in U$ and $f_0$ is a automorphism of $G$. Then the system is defined for any $k \in \Z$. Denote by $\mathfrak{g}$ the Lie algebra of $G$, endowed with the Lie bracket 
\begin{equation}\label{liebracket}
    [X,Y] = \frac{\partial^2}{\partial t \partial s} \left(X_{-t} \circ Y_s \circ X_t\right)\bigg|_{t=s=0}. 
\end{equation}
where $X_t$ and $Y_t$ are the respectives solutions of $X$ and $Y$ at the time $t \in \R$. It is well-known (see \cite{sanmartin1}) that $[X,Y] = 0$ if, and only if, $\exp{tX}\exp{sY} = \exp{sY}\exp{tX}$ for all $t,s\in\mathbb{R}$. 

Considering the reachable sets $\mathcal{R}_k(e)$, $\mathcal{R}_{\leq k}(e) = \{\varphi(t,e,u): t \in [0,k] \cap \N, u \in \mathcal{U}\}$ and $\mathcal{R}(e) = \bigcup_{k \in \N}\mathcal{R}_k(e)$, it is easy to see that $e \in \mathcal{R}_k(e)$ for any $k \in \N.$ Besides, using the notation $\mathcal{R}(e) = \mathcal{R}$, $\mathcal{R}_k(e) = \mathcal{R}_k$ and $\mathcal{R}_{\leq k} = \mathcal{R}_{\leq k}(e)$, we get the following property whose proof can be found in \cite{TAJ}.

\begin{proposition}\label{reachablesetprop}The reachable set $\mathcal{R}$ satisfy the following properties: 
\begin{itemize}
    \item[1-] Given $\tau \geq 1$ in $\N$, then $\mathcal{R}_{\tau} = \mathcal{R}_{\leq \tau}$. 
    \item[2-] Given $0 < \tau_1 \leq \tau_2$ in $\N$, then $\mathcal{R}_{\tau_1} \subset \mathcal{R}_{\tau_2}$. 
    \item[3-] If $g \in G$, then $\mathcal{R}_{\tau}(g) = \mathcal{R}_{\tau} f_0^{\tau}(g)$. 
    \item[4-] If $\tau_1, \tau_2 \in \N$, then $\mathcal{R}_{\tau_1 + \tau_2} = \mathcal{R}_{\tau_1} f_0^{\tau_1}(\mathcal{R}_{\tau_2}) = \mathcal{R}_{\tau_2} f_0^{\tau_2}(\mathcal{R}_{\tau_1})$. 
    \item[5-] For any $u \in \mathcal{U}$, $g \in G$ and $k \in \N$, then 
    $\varphi(k,\mathcal{R}(g),u) \subset \mathcal{R}(g)$
    \item[6-] $e \in \hbox{int}\mathcal{R}$ if and only if $\mathcal{R}$ is open. 
\end{itemize}
\end{proposition}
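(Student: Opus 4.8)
The plan is to treat all six items with one small toolkit: Proposition~\ref{prop52} (which gives $\varphi(k,g,u)=\varphi(k,e,u)f_0^k(g)$), the cocycle and concatenation properties recorded in Section~\ref{prelimi}, together with three elementary facts about the automorphism $f_0$ --- that $f_0^k(e)=e$, that each $f_0^k$ is a diffeomorphism of $G$ (hence an open map), and that $e\in\mathcal{R}_k$ for every $k$ (apply the zero control, so that $\varphi(k,e,0)=f_0^k(e)=e$).

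For items 1 and 2 I would use the idea of \emph{waiting at the identity}. Given $y\in\mathcal{R}_t$ with $t\le\tau$, choose a control $u$ with $\varphi(t,e,u)=y$ and prepend $\tau-t$ zeros to it. Since $f_0^{\tau-t}(e)=e$, the extended control reaches $y$ at time exactly $\tau$, so $\mathcal{R}_{\le\tau}\subseteq\mathcal{R}_\tau$; the reverse inclusion is immediate, which proves item 1, and item 2 is the same computation (or simply $\mathcal{R}_{\tau_1}\subseteq\mathcal{R}_{\le\tau_2}=\mathcal{R}_{\tau_2}$). Item 3 is a direct consequence of Proposition~\ref{prop52}: taking the union over $u\in\mathcal{U}$ in $\varphi(\tau,g,u)=\varphi(\tau,e,u)f_0^\tau(g)$ yields $\mathcal{R}_\tau(g)=\mathcal{R}_\tau f_0^\tau(g)$.

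Item 4 is the semigroup-type factorization and the real workhorse. I would split a control $w$ of the concatenated run at index $\tau_1$: the first block steers $e$ to some $g_1=\varphi(\tau_1,e,w)\in\mathcal{R}_{\tau_1}$, and applying the shifted second block to $g_1$ gives, by Proposition~\ref{prop52}, the point $\varphi(\tau_2,e,\Theta_{\tau_1}(w))\,f_0^{\tau_2}(g_1)$ with $\varphi(\tau_2,e,\Theta_{\tau_1}(w))\in\mathcal{R}_{\tau_2}$. This shows $\mathcal{R}_{\tau_1+\tau_2}\subseteq\mathcal{R}_{\tau_2}f_0^{\tau_2}(\mathcal{R}_{\tau_1})$; the reverse inclusion holds because the shift space $\mathcal{U}=\prod_{i\in\Z}U$ lets me prescribe the two blocks independently, and interchanging the roles of $\tau_1,\tau_2$ gives the second factorization. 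Item 5 is forward invariance: for $y=\varphi(t,g,v)\in\mathcal{R}(g)$ the concatenation property quoted in Section~\ref{prelimi} produces a $w$ with $\varphi(k,y,u)=\varphi(k+t,g,w)\in\mathcal{R}_{k+t}(g)\subseteq\mathcal{R}(g)$.

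The hard part, and the only place where more than bookkeeping is needed, is item 6. The direction ($\Leftarrow$) is trivial since $e\in\mathcal{R}$. For ($\Rightarrow$) I would exploit homogeneity: assuming $e\in\inte\mathcal{R}$, pick an open $W\ni e$ with $W\subseteq\mathcal{R}$, and for an arbitrary $g\in\mathcal{R}_\tau$ consider the map $h\mapsto g\,f_0^\tau(h)$, which is a diffeomorphism of $G$ (the composition of $f_0^\tau$ with a left translation) sending $e$ to $g$. For each $w\in W$ we have $w\in\mathcal{R}_s$ for some $s$, and item 4 gives $g\,f_0^\tau(w)\in\mathcal{R}_\tau f_0^\tau(\mathcal{R}_s)=\mathcal{R}_{\tau+s}\subseteq\mathcal{R}$; hence the open set $g\,f_0^\tau(W)$ is a neighborhood of $g$ contained in $\mathcal{R}$, so $g\in\inte\mathcal{R}$ and $\mathcal{R}$ is open. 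The subtle point to watch is that the exponent $s$ depends on $w$, so the argument must be run pointwise through item 4 rather than trying to fit all of $W$ inside a single $\mathcal{R}_s$.
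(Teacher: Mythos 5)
Your proof is correct, and it follows essentially the same route as the literature the paper relies on: the paper itself gives no inline proof but defers to \cite{TAJ}, where the argument runs on exactly your toolkit --- the translation formula of Proposition \ref{prop52}, waiting at the identity with zero controls, block concatenation via the shift, and, for item 6, transporting a neighborhood of $e$ by the diffeomorphisms $h\mapsto g\,f_0^\tau(h)$. You also correctly navigate the two places where the argument could fail: prepending (rather than appending) the zero controls in item 1, since appending would land at $f_0^{\tau-t}(y)\neq y$, and running item 6 pointwise because the time $s$ with $w\in\mathcal{R}_s$ varies with $w\in W$.
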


Regarding the set $\mathcal{R}$ and also denoting $\mathcal{C}_k(e) = \mathcal{C}_k$, we have the following connection between the sets $\mathcal{R}_k$ and $\mathcal{C}_k$. 

\begin{proposition}\label{openess}Consider the linear system (\ref{discreteLinearSystem}). Then $\hbox{int}\mathcal{R}_k \neq \emptyset$ if, and only if, $\hbox{int}\mathcal{C}_k \neq \emptyset.$  
\end{proposition}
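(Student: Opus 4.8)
The plan is to exhibit an explicit diffeomorphism of $G$ carrying the reachable set $\mathcal{R}_k$ onto the controllable set $\mathcal{C}_k$; since a diffeomorphism is an open map whose inverse is continuous, it preserves the property of having nonempty interior, and the equivalence follows at once.

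First I would characterize $\mathcal{C}_k$ explicitly. By definition $y \in \mathcal{C}_k$ precisely when $\varphi(k,y,u) = e$ for some $u \in \mathcal{U}$. Invoking Proposition \ref{prop52}, $\varphi(k,y,u) = \varphi(k,e,u)\, f_0^k(y)$, so the condition $\varphi(k,y,u) = e$ is equivalent to $f_0^k(y) = [\varphi(k,e,u)]^{-1}$, that is, $y = f_0^{-k}\big([\varphi(k,e,u)]^{-1}\big)$. As $u$ ranges over $\mathcal{U}$ the element $\varphi(k,e,u)$ ranges exactly over $\mathcal{R}_k$, whence
\begin{equation*}
    \mathcal{C}_k = \{\, f_0^{-k}(g^{-1}) : g \in \mathcal{R}_k \,\}.
\end{equation*}

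Next I would package this as $\mathcal{C}_k = \Psi(\mathcal{R}_k)$, where $\Psi = f_0^{-k} \circ \iota$ and $\iota(g) = g^{-1}$ is the inversion of $G$. The map $\iota$ is a diffeomorphism of $G$, and since $f_0$ is an automorphism of $G$ so is $f_0^{-k} = (f_0^{-1})^k$; hence $\Psi$ is a diffeomorphism of $G$. A diffeomorphism sends open sets to open sets, so if $\mathcal{R}_k$ contains a nonempty open set then its image under $\Psi$, which lies in $\mathcal{C}_k$, is nonempty and open; applying the same argument to $\Psi^{-1}$ yields the reverse implication. This gives $\hbox{int}\,\mathcal{R}_k \neq \emptyset$ if and only if $\hbox{int}\,\mathcal{C}_k \neq \emptyset$.

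There is essentially no obstacle here beyond the bookkeeping of the previous display: the only points requiring care are the algebraic manipulation turning the endpoint condition into the image formula for $\mathcal{C}_k$, together with the observation that inversion and powers of the automorphism $f_0$ are diffeomorphisms. Alternatively, one could argue through the reversed-time system (\ref{reversed-time-system}), using Lemma \ref{setsinvsys} to write $\mathcal{C}_k = \mathcal{R}_k^*$ and then comparing the two reachable sets; but the direct diffeomorphism above is shorter and avoids passing to the auxiliary system.
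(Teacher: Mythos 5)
Your proof is correct and takes essentially the same route as the paper: both arguments rest on the identity $\varphi(k,y,u)=\varphi(k,e,u)f_0^{k}(y)$ from Proposition \ref{prop52} and on the map $g \mapsto f_0^{-k}(g^{-1})$, the paper establishing the two neighborhood inclusions $f_0^{-k}(V^{-1})\subset\mathcal{C}_k$ and $f_0^{k}(V^{-1})\subset\mathcal{R}_k$ while you sharpen this to the exact set identity $\mathcal{C}_k=\Psi(\mathcal{R}_k)$ with $\Psi=f_0^{-k}\circ\iota$ a diffeomorphism. This packaging is a clean (and valid) refinement, not a different method.
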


\begin{proof}
 Let us suppose $\hbox{int}\mathcal{R}_k \neq \emptyset$ and consider the automorphism $f_0$ of the system (\ref{discreteLinearSystem}). Take $g \in \hbox{int}\mathcal{R}_k$. Hence, there is a $u \in \mathcal{U}$ such that 
$ g = \varphi(k,e,u)$,
that is $\varphi(k,e,u)g^{-1} = e$. Using the properties of $\varphi$ we get 
$    \varphi(k,e,u)g^{-1} = $ 
 $\varphi(k,e,u)f_0^k(f_0^{-k}(g^{-1})) = $ $\varphi(k,f_0^{-k}(g^{-1}),u) = e$,
that is $f_0^{-k}(g^{-1}) \in \mathcal{C}_k$. Now, consider $V$ a neighborhood of $g$ such that $g \in V \subset \mathcal{R}_k.$ By the arguments above, $f^{-k}_0(V^{-1}) \subset \mathcal{C}_k$. The function $f_0^{-k}$ is an automorphism of $G$ and $V^{-1}$ is a neighborhood of $g^{-1}$. Then $f_0^{-k}(g^{-1}) \in \hbox{int}\mathcal{C}_k$.

Now, if $\hbox{int}\mathcal{C}_k\neq \emptyset$, let us take $g \in \hbox{int}\mathcal{C}_k$. Then, there are $k \in \N$ and $u \in \mathcal{U}$ such that $\varphi(k,g,u) = e$. Hence
$    \varphi(k,g,u) = \varphi(k,e,u)f_0^k(g) = e$,
and consequently $\varphi(k,e,u) = f_0^{k}(g^{-1})$. If $g \in V \subset \mathcal{C}_k$, for some neighborhood $V$ of $g$, by the previous argument, $f_0^k(V^{-1}) \subset \mathcal{R}_k$. Therefore $\hbox{int}\mathcal{R}_k \neq \emptyset.$
\end{proof}

\begin{remark}\label{openess1}For the case when $\hbox{int}\mathcal{R} \neq \emptyset$, in \cite{CCS2} it is cited that there is a $k_0 \geq 1$ such that $\hbox{int}\mathcal{R}_k \neq \emptyset$ for every $k \geq k_0$. Thus, the proposition above ensures that $\hbox{int}\mathcal{R} \neq \emptyset$ if, and only if, $\hbox{int}\mathcal{C} \neq \emptyset$. Also, if $e \in \hbox{int}\mathcal{R}$, by the proof above, $e \in \hbox{int}\mathcal{C}$. Therefore, $\mathcal{R}$ is open if, and only if, $\mathcal{C}$ is open 
\end{remark}

\begin{lemma}\label{lemmaaccessibility} The system (\ref{discreteLinearSystem}) is accessible if and only if $\hbox{int}\mathcal{R} \neq \emptyset$. 
\end{lemma}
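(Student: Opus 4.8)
The plan is to prove the two implications separately, observing that ``accessible'' bundles together forward accessibility ($\mathrm{int}\,\mathcal{R}(x)\neq\emptyset$ for all $x$) and backward accessibility ($\mathrm{int}\,\mathcal{C}(x)\neq\emptyset$ for all $x$), whereas the hypothesis $\mathrm{int}\,\mathcal{R}\neq\emptyset$ only concerns the base point $e$. Thus the entire content of the lemma is a homogeneity statement: interior-nonemptiness of the reachable (resp. controllable) set is independent of the base point, and the reachable and controllable cases are interchangeable. The forward direction is immediate, since accessibility implies forward accessibility and $\mathcal{R}=\mathcal{R}(e)$, so taking $x=e$ yields $\mathrm{int}\,\mathcal{R}\neq\emptyset$ at once.

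For the converse, assume $\mathrm{int}\,\mathcal{R}\neq\emptyset$. First I would invoke Remark \ref{openess1} to pass from the infinite union to a finite time: there is some $k$ with $\mathrm{int}\,\mathcal{R}_k\neq\emptyset$. Then item 3 of Proposition \ref{reachablesetprop} gives $\mathcal{R}_k(x)=\mathcal{R}_k\,f_0^{k}(x)$, and since right translation by $f_0^{k}(x)$ is a diffeomorphism of $G$ it carries $\mathrm{int}\,\mathcal{R}_k$ onto $\mathrm{int}\,\mathcal{R}_k(x)$; hence $\mathrm{int}\,\mathcal{R}_k(x)\neq\emptyset$ and, because $\mathcal{R}_k(x)\subset\mathcal{R}(x)$, also $\mathrm{int}\,\mathcal{R}(x)\neq\emptyset$ for every $x\in G$. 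This establishes forward accessibility.

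For backward accessibility I would first record a controllable-set analogue of item 3, namely $\mathcal{C}_k(x)=\mathcal{C}_k\,f_0^{-k}(x)$. This follows from a short computation with Proposition \ref{prop52}: the condition $y\in\mathcal{C}_k(x)$ means $\varphi(k,e,u)f_0^{k}(y)=x$ for some $u$, i.e. $f_0^{k}(y)=\varphi(k,e,u)^{-1}x$, which is precisely the condition defining membership of $y\,f_0^{-k}(x^{-1})$ in $\mathcal{C}_k$ once one applies the homomorphism $f_0^{k}$ to the factor $f_0^{-k}(x^{-1})$. Using the same $k$ fixed above, Proposition \ref{openess} gives $\mathrm{int}\,\mathcal{C}_k\neq\emptyset$, and then the translation identity together with the fact that $f_0^{-k}$ and right translations are diffeomorphisms yields $\mathrm{int}\,\mathcal{C}_k(x)\neq\emptyset$, hence $\mathrm{int}\,\mathcal{C}(x)\neq\emptyset$, for every $x$. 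Combining the two parts shows that the system is accessible.

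The step I expect to require the most care is the reduction from the infinite union $\mathcal{R}=\bigcup_k\mathcal{R}_k$ to a single finite-time set $\mathcal{R}_k$ with nonempty interior: in general an increasing union of sets with empty interior may have nonempty interior, so this passage genuinely relies on Remark \ref{openess1} and the result from \cite{CCS2} cited there. The remaining obstacle is purely bookkeeping, namely verifying the controllable-set translation identity $\mathcal{C}_k(x)=\mathcal{C}_k\,f_0^{-k}(x)$, which reduces to the homomorphism property of $f_0^{k}$ applied to the defining relation of $\mathcal{C}_k$.
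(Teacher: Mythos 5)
Your proposal is correct and follows essentially the same route as the paper's proof: pass from $\mathrm{int}\,\mathcal{R}\neq\emptyset$ to $\mathrm{int}\,\mathcal{R}_k\neq\emptyset$ via the result cited from \cite{CCS2}, translate by $f_0^{k}(x)$ using item 3 of Proposition \ref{reachablesetprop} for forward accessibility, and combine Proposition \ref{openess} with the identity $\mathcal{C}_k(x)=\mathcal{C}_k f_0^{-k}(x)$ for backward accessibility. The only difference is that you spell out the verification of that controllable-set translation identity, which the paper merely asserts, so your write-up is if anything slightly more complete.
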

\begin{proof}
    Under the hypothesis that $\hbox{int}\mathcal{R} \neq \emptyset,$ there is a $k_0 \in \N$ such that if $k \geq k_0$, $\hbox{int}\mathcal{R}_k \neq \emptyset.$ (see \cite[Page 3]{CCS2}). Consider $V$ an open set such that $V \subset \mathcal{R}_k$. For any $g \in G$, the set $Vf_0^k(g)$ is an open set and 
\begin{equation}
    V f_0^k(g) \subset \mathcal{R}_k f_0^k(g) = \mathcal{R}_k(g). 
\end{equation}

Then $\hbox{int}\mathcal{R}_k(g) \neq \emptyset$, for every $k \geq k_0$. This proves that the system (\ref{discreteLinearSystem}) is forward accessible if $\hbox{int}\mathcal{R} \neq \emptyset.$ For backward accessibility, using the fact of $\mathcal{C}_k(g) = \mathcal{C}_kf_0^{-k}(g)$, Proposition (\ref{openess}) ensures that (\ref{discreteLinearSystem}) is accessible (forward and backward) if $\hbox{int}\mathcal{R} \neq \emptyset.$ The other implications is true by definition.
\end{proof}

\subsection{Main results}

We introduce some of needed concepts and algebric generalities. Whenever we say differentiable we mean continuously differentiable. If $M$ is a manifold and $x\in M$, recall that any vector in the tangent space $T_xM$ of $M$ can be written as
$c'(0)=\left.\frac{d}{dt}\right|_{t=0}c(t)$
for some differentiable curve $c:(-\varepsilon,\varepsilon)\rightarrow M$ such that $c(0)=x$.

Let $G$ be a Lie group and $\mathfrak{g}$ its Lie algebra. It is well known that for $X\in\mathfrak{g}$ and $g\in G$ we have $X(g)$ $=$ $X_R(g)$ $=$ $d(R_g)_e(X)$ $=$ $\left.\frac{d}{dt}\right|_{t=0}e^{tX}g,$
where $X_R$ denotes the only right invariant vector field in $G$ satisfying $X_R(e)=X$. Furthermore, for $g\in G$ the map
$         \phi_g:\mathfrak{g} \rightarrow T_gG  \, , \,\,\, X\mapsto X(g) \, ,
   $
is a linear isomorphism between $\mathfrak{g}$ and $T_gG$.

Take $H$ a Lie group, $\mathfrak{h}$ its Lie algebra, and $f:O\subset H\rightarrow G$ a differentiable map with $O$ an open set in $H$. For any $h \in O$, we define the derivative $\hat d$ in $h$ by
$$
         \hat df_h:\mathfrak{h}\longrightarrow\mathfrak{g} \, , \,\,\,        X\mapsto \phi_{f(h)}^{-1}df_h(X(h)),
   $$
where $df_h:T_{h}H\rightarrow T_{f(h)}G$ denotes the usual derivative in the sense of manifolds. Thus, $\hat df_h(X)$ is defined as the vector field $Y\in \mathfrak{g}$ which satisfies $Y(f(h))=df_h(X(h))$. Alternatively, $\hat df_h(X)$ can be calculated from the equality

\begin{equation*}
    \hat df_h(X) = d(R^G_{f(h)^{-1}})_{f(h)} \circ df_h \circ d(R^{H}_h)_{e_H}(X).
\end{equation*}

Since the derivative $\hat d$ is defined directly from the usual derivative, it shares most of its properties, like linearity, the chain rule, the product rule and so on.

\begin{remark}
    Take $G=\mathbb{R}^n$. Recall that its Lie algebra, formed by the right-invariant vector fields, is  $\mathbb{R}^n$ since we define
    $$X(g)=\left.\frac{d}{dt}\right|_{t=0}(g+tX)\in T_g\mathbb{R}^n$$
    for all $X,g\in\mathbb{R}^n$. Hence, if $c:(-\varepsilon,\varepsilon)\rightarrow \mathbb{R}^n$ is a differentiable curve with $c(0)=g$, and $c'(0)\in T_g\mathbb{R}^n$ denotes the tangent vector in the sense of manifolds, then
    $\phi^{-1}(c'(0))$ $=$ $\lim_{t\rightarrow 0}\frac{c(t)-c(0)}{t}\in\mathbb{R}^n$
    and, thus, the derivative in the classical sense can be recovered using $\phi^{-1}$.

    If $f:O\subset\mathbb{R}^n\rightarrow\mathbb{R}^m$ is a differentiable function then, for each $g,X\in\mathbb{R}^n$,
    $$\hat df_g(X)=\phi_{f(g)}^{-1}df_g\left(\left.\frac{d}{dt}\right|_{t=0}g+tX\right)=\lim_{t\rightarrow 0}\frac{f(g+tX)-f(g)}{t},$$
    and $\hat df_g$ coincides with the Fréchet derivative of $f$. 
\end{remark}

The following constructions are useful  to introduce the concept of ad-rank condition for discrete-time linear control system.

Take $\psi:G\rightarrow H$   an arbitrary automorphism between the Lie groups $G$ and $H$ and $\bpsi:\mathfrak{g}\rightarrow\mathfrak{h}$  its   infinitesimal automorphism. For any $X\in\mathfrak{g}$ and $g\in G$ we have that
$$
    \hat d\psi_g(X)=\phi_{\psi(g)}^{-1}\left.\frac{d}{dt}\right|_{t=0}\psi(e^{tX}g)
        =\bpsi(X). 
$$

Thus, $\hat d\psi_g=\bpsi$ for all $g\in G$.
Now, if  $g\in G$ and
$
         R_g:G\longrightarrow G$ with $
        h\mapsto gh,
  $ is
the right-translation by $g$, then for any  $X\in\mathfrak{g}$ and $h\in G$, one has $d(R_g)_e(X(h))=X(hg)$. Thus, $\hat d(R_g)_h$ coincides with the identity $I:\mathfrak{g}\rightarrow\mathfrak{g}$ for all $h\in G$.
On the other hand, if
$         L_g:G\longrightarrow G$ with      $ h\mapsto gh $
   the left-translation by $g$. For any $h\in G$ and $X\in \mathfrak{g}$,
$$
    \left.\frac{d}{dt}\right|_{t=0}L_g(e^{tX}h) = \left.\frac{d}{dt}\right|_{t=0}ge^{tX}g^{-1}gh=  Ad(g)(X)(gh).
$$

Hence, $\hat d(L_g)_h=Ad(g)$ for all $h\in G$.

Now we can define the ad-rank condition. Then consider the discrete linear control system (\ref{discreteLinearSystem})
and denote by $\phi: \Z \times G \times \mathcal{U} \longrightarrow G$ its solution, where $\mathcal{U} = \prod_{i \in \N} U$.  
Denote by $\psi$ the automorphism $f_0=\phi(1,\cdot,0)$. The flow satisfies
$    \phi(1,g,u)=$  $\phi(1,e,u)\psi(g)$,    and, more generally,
$   \phi(n,g,u)=$  $\phi(n,e,u)\psi^n(g)$. 
Take  $\bpsi:\mathfrak{g}\rightarrow\mathfrak{g}$   the infinitesimal automorphism given by $\psi$ and define the map $F:U\longrightarrow G$  as $F(x)=f_x(e)=\phi(1,e,x)$.

\begin{definition}
    \label{adrankdef} The linear control system (\ref{discreteLinearSystem}) is said to satisfy the ad-rank condition if the smallest $\bpsi$ invariant subspace containing the image of $\hat dF_0$ coincides with $\mathfrak{g}$
\end{definition}

\noindent{\bf $U$ - connected open neighborhood of the origin}: First we assume this condition on $U$ and that $F$ is differentiable in $U$ to prove the accessibility criteria. Later we will show how it can be applied in the case where $U$ is a compact, convex neighborhood of $0$ instead (see Theorem \ref{howToApplyToCompact}).

Let $W\subset \mathfrak{g}$ be the smallest subspace containing the images $\hat dF_x(\mathbb{R}^m)$ for $x\in U$. Equivalently, $W$ is the subspace spanned by the set
$$\{\hat dF_x(y);x\in U,y\in\mathbb{R}^m\}.$$

Consider $V\subset\mathfrak{g}$ the smallest $\bpsi$ invariant subspace containing $W$. If $B$ is a matrix whose columns form a basis of $W$, we have that this space coincides with the image of the Kalman matrix
\begin{equation*}
\begin{pmatrix}
    \bpsi^{n-1}B & \bpsi^{n-2}B & ... & B
\end{pmatrix}
\end{equation*}
where $n=\dim\mathfrak{g}$. Let $\mathfrak{h}$ the Lie sub-algebra generated by $V$. Note that $\mathfrak{h}$ is also $\bpsi$ invariant. In fact, $\bpsi(\mathfrak{h})$ is the sub-algebra generated by $\bpsi(V)$, since $\bpsi$ is an automorphism. However, $\bpsi(V)=V$ as $V$. Equivalently, $\mathfrak{h}$ is the smallest $\bpsi$ invariant sub-algebra of $\mathfrak{g}$ containing the image of all $\hat df_x$ for $x\in U$. This subalgebra will be central for our accessibility criterion.

First, the next proposition gives us another way to define the sub-algebra $\mathfrak{h}$, in terms of the image of $F$ instead of its derivatives.

\begin{proposition}
\label{propImgDer}
Let $H$ the Lie group generated by $\mathfrak{h}$. The image of $F$ is contained in $H$.
\end{proposition}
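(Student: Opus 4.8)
The plan is to prove that $F(x)\in H$ for every $x\in U$ by joining $x$ to the origin inside the connected set $U$ and then recognizing the image curve in $G$ as the solution of a right-invariant ODE whose generator lies in $\mathfrak h$. The starting point is that $F(0)=f_0(e)=e$, since $f_0$ is an automorphism, and $e\in H$. Given an arbitrary $x\in U$, since $U$ is open and connected, hence path-connected, I would choose a $C^1$ path $\gamma:[0,1]\to U$ with $\gamma(0)=0$ and $\gamma(1)=x$, and consider the curve $c(t)=F(\gamma(t))$ in $G$, which satisfies $c(0)=e$ and $c(1)=F(x)$.

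The key computation rewrites $c'(t)$ in terms of $\hat d F$. By the chain rule $c'(t)=dF_{\gamma(t)}(\gamma'(t))$, and since the right-invariant vector field on $\R^m$ determined by the value $\gamma'(t)$ is constant, the definition of $\hat d$ together with $\phi_g(Z)=Z(g)$ gives
$$c'(t)=\phi_{F(\gamma(t))}\bigl(\hat dF_{\gamma(t)}(\gamma'(t))\bigr)=Z(t)\bigl(c(t)\bigr),\qquad Z(t):=\hat dF_{\gamma(t)}(\gamma'(t)).$$
By hypothesis $Z(t)\in\operatorname{Im}\hat dF_{\gamma(t)}\subset W\subset\mathfrak h$ for every $t$, and $Z(t)$ depends continuously on $t$ because $F$ is continuously differentiable. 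Thus $c$ is an integral curve, starting at $e$, of the time-dependent right-invariant vector field $t\mapsto Z(t)_R$ whose generator $Z(t)$ stays in the subalgebra $\mathfrak h$.

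To conclude, I would use that $\mathfrak h$ is a subalgebra, so that $g\mapsto\phi_g(\mathfrak h)=\{Z(g):Z\in\mathfrak h\}$ is a smooth involutive, hence integrable, distribution whose leaf through any $g$ is the coset $Hg$; the equation above says exactly that $c'(t)$ is tangent to this foliation at $c(t)$. Concretely, the same ODE $\tilde c'(t)=Z(t)(\tilde c(t))$, $\tilde c(0)=e$, can be solved inside the connected integral subgroup $H$, where $t\mapsto Z(t)_R$ restricts to a time-dependent right-invariant vector field on $H$; such an equation admits a solution on all of $[0,1]$ because right-invariant fields on a Lie group are complete. Including this solution into $G$ yields a curve satisfying the same ODE with the same initial value as $c$, so uniqueness of integral curves in $G$ forces $c(t)=\iota(\tilde c(t))\in H$ for all $t$; in particular $F(x)=c(1)\in H$. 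As $x\in U$ was arbitrary, $F(U)\subset H$.

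The main obstacle is this final step, where $H$ is only an immersed, not embedded, subgroup: one must justify both that the ODE can be solved within $H$ on the whole interval and that the inclusion-composed solution really agrees with $c$ as a curve in $G$. This is where completeness of right-invariant vector fields and uniqueness of integral curves of a prescribed time-dependent field must be invoked with care. A clean alternative is to appeal directly to the standard foliation fact that a $C^1$ map from a connected manifold whose differential takes values in an integrable distribution has image contained in a single leaf, here the leaf $H$ through $F(0)=e$.
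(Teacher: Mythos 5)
Your proof is correct, but it runs along a genuinely different track than the paper's. The paper argues directly on the map $F$ with no ODE theory: around each point it chooses a chart adapted to the foliation of $G$ by cosets $Hg$ (the leaves of the right-invariant distribution integrating $\mathfrak{h}$), uses $\hat dF_y(\mathbb{R}^m)\subset\mathfrak{h}$ to see that $\phi\circ F$ locally stays in a single slice $V_2\times\{y\}$, concludes that each preimage $F^{-1}(Hg)$ is open, and then observes that these sets form an open partition of the connected set $U$, so the image lies in one coset, which is $H$ itself since $F(0)=e$. You instead reduce to curves: joining $0$ to $x$ by a $C^1$ path in $U$, you show $c(t)=F(\gamma(t))$ is an integral curve of the time-dependent right-invariant field with generator $Z(t)=\hat dF_{\gamma(t)}(\gamma'(t))\in\mathfrak{h}$, solve the same equation inside the integral subgroup $H$ (global existence by right-invariance), and use uniqueness of solutions in $G$ to force $c([0,1])\subset H$. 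Both arguments rest on the same geometric fact, that $\mathfrak{h}$ integrates to the coset foliation, but the trade-offs differ: your development argument exploits the group structure to get global solvability and handles the immersed-subgroup subtlety explicitly (correctly so, since $\iota\circ\tilde c$ is $C^1$ into $G$ because the inclusion is a smooth immersion, and $Z(t)$ is continuous under the paper's convention that differentiable means continuously differentiable), whereas the paper's chart argument needs only differentiability of $F$ and connectedness of $U$, avoiding completeness and uniqueness questions altogether. The ``clean alternative'' you mention at the end --- that a $C^1$ map from a connected manifold whose differential takes values in an integrable distribution has image in a single leaf --- is precisely the statement the paper proves by hand with its adapted charts, so that variant of your argument coincides with the paper's proof.
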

\begin{proof}
Let $x\in U$ be arbitrary. Writing $r=\dim(H)$ and $n=r+d=\dim(G)$, one can choose a local chart
$$\phi:V_1\subset G\rightarrow V_2\times V_3\subset\mathbb{R}^r\times\mathbb{R}^d$$
such that $x\in V_1$, $\phi(F(x))=0$ and, for each $y\in V_3$, $\phi^{-1}(V_2\times y)$ is entirely contained in some cosset $Hg$. Since $\hat dF_y(X)\in\mathfrak{h}$ for all $y\in U,X\in\mathbb{R}^r$, then $d(\phi\circ F)_y(X)\in\mathbb{R}^r\times 0$ for all $y\in F^{-1}(V_1)$. Thus, there is a neighborhood $O\subset U$ of $x$ and $y\in\mathbb{R}^d$ such that $\phi\circ F(O)\subset V_2\times y$, which implies that $F(O)$ is contained in a some cosset $Hg$. Thus $x\in \hbox{int}(F^{-1}(Hg))$, and, repeating this argument for all $x\in U$ we conclude that the sets $F^{-1}(Hg)$ with $g\in G$ are open and form a partition of $U$. Since $U$ is connected, this implies that the image of $F$ must be entirely contained in one cosset $Hg_0$. Furthermore, $e=F(0)$ and, thus, $e\in Hg_0$, which implies that $Hg_0=H$. Thus, $H$ contains the image of $F$.
\end{proof}

Now let $H^*$ be the smallest subgroup containing the image of $F$. This subgroup is path-connected, and, therefore, a Lie sub-group. Let $\mathfrak{h}^*\subset\mathfrak{g}$ the Lie sub-algebra associated to it, and $\mathfrak{h}_2$ the smallest $\bpsi$ invariant sub-algebra containing $\mathfrak{h}^*$. By the previous proposition, $H^*\subset H$ and, then $\mathfrak{h}^*\subset\mathfrak{h}$. Thus, $\mathfrak{h}_2\subset\mathfrak{h}$. On the other hand, since the image of $F$ is contained in $H^*$ then the image of $\hat dF_x$ is contained in $\mathfrak{h}^*$ for all $x\in U$. Consequently, $\mathfrak{h}\subset\mathfrak{h}_2$, and these two sub-algebras coincide.

Therefore, $\mathfrak{h}$ can equivalently be defined as the smallest $\bpsi$ invariant Lie algebra containing $\mathfrak{h}^*$, where $\mathfrak{h}^*$ is the Lie sub-algebra associated to the smallest sub-group containing the image of $F$.

The next theorem states our accessibility criteria, using the subalgebra $\mathfrak{h}$:

\begin{theorem}
\label{accessibilityCondition}
The system is accessible if, and only if, $\mathfrak{h}=\mathfrak{g}$.
\end{theorem}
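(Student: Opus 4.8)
The plan is to combine Lemma~\ref{lemmaaccessibility}, which reduces accessibility to the single condition $\hbox{int}\,\mathcal R\neq\emptyset$, with the fact that $\hbox{int}\,\mathcal R\neq\emptyset$ is governed by the rank of the endpoint maps $G_k$. For the necessity of $\mathfrak h=\mathfrak g$ I would argue by contraposition: since $\mathfrak h$ is $\bpsi$-invariant, the subgroup $H$ it generates is $f_0$-invariant, and by Proposition~\ref{propImgDer} we have $\mathcal R_1=F(U)\subseteq H$; iterating the identity $\mathcal R_{\tau_1+\tau_2}=\mathcal R_{\tau_1}f_0^{\tau_1}(\mathcal R_{\tau_2})$ of Proposition~\ref{reachablesetprop} gives $\mathcal R_k\subseteq H f_0(H)\cdots f_0^{k-1}(H)=H$, hence $\mathcal R\subseteq H$. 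If $\mathfrak h\neq\mathfrak g$ then $H$ is a proper immersed subgroup and has empty interior, so $\hbox{int}\,\mathcal R=\emptyset$ and the system is not accessible. This settles one direction.

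For sufficiency I would produce a single regular pair $(e,\bar u)$ (Definition~\ref{regular}); then $\hat{\mathcal R}_k(e)$ is a nonempty open subset of $\mathcal R$ and accessibility follows from Lemma~\ref{lemmaaccessibility}. The first step is to compute the derivative. Writing $\varphi(k,e,u)=F(u_{k-1})f_0(F(u_{k-2}))\cdots f_0^{k-1}(F(u_0))$ and differentiating each factor in the right-invariant frame with the $\hat d$-calculus ($\hat d(f_0^{j})=\bpsi^{j}$, $\hat dL_g=Ad(g)$, $\hat dR_g=I$, plus the product and chain rules), I get that $\phi_P^{-1}$ carries the image of $\partial_u G_k(e,\bar u)$ at $P=\varphi(k,e,\bar u)$ onto
$$\sum_{i=0}^{k-1}Ad(Q_i)\,\bpsi^{\,k-1-i}\big(\hbox{im}\,\hat dF_{\bar u_i}\big),\qquad Q_i=F(\bar u_{k-1})\cdots f_0^{\,k-2-i}(F(\bar u_{i+1})).$$
Let $\mathfrak m\subseteq\mathfrak g$ be the span of all these terms over every $k$ and every $\bar u\in\hbox{int}\,U^k$. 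Because each $Q_i\in H$ and $\hbox{im}\,\hat dF_{\bar u_i}\subseteq W\subseteq\mathfrak h$ with $Ad(H)\mathfrak h=\mathfrak h$, we have $\mathfrak m\subseteq\mathfrak h$.

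The heart of the argument is to show $\mathfrak m=\mathfrak h$, for which I would establish a concatenation formula: if a control $a$ of length $\ell_a$ is run before a control $b$ of length $\ell_b$, then the corresponding term-spans satisfy $T_{a\,\hbox{then}\,b}=T_b+Ad(P_b)\,\bpsi^{\,\ell_b}\,T_a$, where $P_b$ is the endpoint of $b$; the only nontrivial identity needed is $Ad(f_0(g))\bpsi=\bpsi\,Ad(g)$. Three consequences follow. Taking $b$ a single step with $u=0$ gives $\bpsi\,T_a\subseteq\mathfrak m$, so $\bpsi\mathfrak m=\mathfrak m$; single-step controls give $\hbox{im}\,\hat dF_x\subseteq\mathfrak m$, so $W\subseteq\mathfrak m$; and, fixing $b$ with endpoint $P\in\mathcal R$ and letting $a$ vary, $Ad(P)\bpsi^{\ell_b}T_a\subseteq\mathfrak m$ forces $Ad(P)\mathfrak m\subseteq\mathfrak m$ for all $P\in\mathcal R$. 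The last statement says $\mathcal R$ lies in the stabilizer subgroup $N=\{g:Ad(g)\mathfrak m=\mathfrak m\}$, whose Lie algebra is the normalizer $\mathfrak n_{\mathfrak g}(\mathfrak m)$; since the reachable directions spanning $\mathfrak m$ are tangent to $\mathcal R\subseteq N$, we get $\mathfrak m\subseteq\mathfrak n_{\mathfrak g}(\mathfrak m)$, i.e. $\mathfrak m$ is a subalgebra. Being a $\bpsi$-invariant subalgebra containing $W$, it contains $\mathfrak h$, and therefore $\mathfrak m=\mathfrak h$.

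The step I expect to be the main obstacle is passing from ``$\mathfrak m=\mathfrak g$'' (the terms span $\mathfrak g$ only after a union over all controls) to one control whose derivative already has rank $\dim G$. I would resolve this again through the concatenation formula: let $\bar u^{*}$ attain the global maximum $\rho$ of $\dim T_{\bar u}$ over all $k$ and all interior controls. For every control $a$, the control $a\,\hbox{then}\,\bar u^{*}$ has term-span $T_{\bar u^{*}}+Ad(P_{\bar u^{*}})\bpsi^{\ell}T_a$ of dimension at most $\rho$, whence $Ad(P_{\bar u^{*}})\bpsi^{\ell}T_a\subseteq T_{\bar u^{*}}$; letting $a$ vary and using $\bpsi\mathfrak m=\mathfrak m$ and $Ad(P_{\bar u^{*}})\mathfrak m=\mathfrak m$ gives $\mathfrak m\subseteq T_{\bar u^{*}}$, so $T_{\bar u^{*}}=\mathfrak m=\mathfrak g$. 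Thus $\bar u^{*}$ is regular and the system is accessible. The technical care lies in the $\hat d$-computation of the endpoint derivative and in verifying the concatenation formula; once these are in place, the invariance bookkeeping closes both directions.
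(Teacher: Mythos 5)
Your proposal is correct and follows essentially the same route as the paper: your concatenation formula $T_{a\,\mathrm{then}\,b}=T_b+Ad(P_b)\bpsi^{\ell_b}T_a$ is exactly the paper's constructions $\tilde f(x,y)=f(x)\psi^k f(y)$, $f_1$, $f_2$ in unified form, and the dimension-maximality trick forcing $\bpsi$- and $Ad(\mathcal{R})$-invariance, the identification of the maximal span with $\mathfrak{h}$, and the conclusion via the submersion theorem all coincide with the paper's lemmas. The only differences are presentational --- you work with term-spans of the explicit endpoint maps and apply maximality at the end rather than fixing a maximal $f\in\mathcal{F}$ at the start, and you package the subalgebra step through the closed stabilizer subgroup $N=\{g:Ad(g)\mathfrak{m}=\mathfrak{m}\}$ and its normalizer Lie algebra where the paper differentiates the curve $t\mapsto Ad(f(ty))(Z)$ directly --- both of which are sound.
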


Before proving Theorem \ref{accessibilityCondition} we will prove several lemmas and propositions. As in the previous proposition, we will denote by $H\subset G$ the subgroup generated by $\mathfrak{h}$.

\begin{proposition}
\label{proposition2}
For all $k\in\mathbb{N}$, $\mathcal{R}_k$ is contained in $H$.
\end{proposition}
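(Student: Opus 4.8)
The plan is to argue by induction on $k$, using the recursive structure of the reachable sets recorded in Proposition \ref{reachablesetprop} together with the invariance of $H$ under the automorphism $\psi=f_0$. The two ingredients I would assemble first are: (i) the base-case identity $\mathcal{R}_1=F(U)$, which lies in $H$ by Proposition \ref{propImgDer}; and (ii) the fact that $\psi$ maps $H$ onto itself.

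For ingredient (ii) I would observe that, since $\mathfrak{h}$ is $\bpsi$-invariant and $\psi(\exp X)=\exp(\bpsi X)$ for every $X\in\mathfrak{g}$, the automorphism $\psi$ carries the connected subgroup $H=\langle\exp\mathfrak{h}\rangle$ onto $\langle\exp\bpsi(\mathfrak{h})\rangle=\langle\exp\mathfrak{h}\rangle=H$. Consequently $\psi^{j}(H)=H$ for every $j\in\mathbb{N}_{0}$. With this in hand, the inductive step is immediate: assuming $\mathcal{R}_k\subset H$, item 4 of Proposition \ref{reachablesetprop} with $\tau_1=1$, $\tau_2=k$ yields $\mathcal{R}_{k+1}=\mathcal{R}_1\,\psi(\mathcal{R}_k)$; by the inductive hypothesis and the invariance, $\psi(\mathcal{R}_k)\subset\psi(H)=H$ and $\mathcal{R}_1\subset H$, so the product lies in $H$ because $H$ is a subgroup. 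This closes the induction and proves $\mathcal{R}_k\subset H$ for all $k$.

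As an alternative I could bypass the recursion and unwind the solution explicitly as $\varphi(k,e,u)=F(u_{k-1})\,\psi(F(u_{k-2}))\cdots\psi^{k-1}(F(u_0))$, noting that each factor $\psi^{j}(F(u_{k-1-j}))$ lies in $\psi^{j}(H)=H$ and that $H$ is closed under products. I expect the only delicate point to be the invariance claim $\psi(H)=H$: it requires passing from the $\bpsi$-invariance of the subalgebra $\mathfrak{h}$ to the invariance of the associated connected subgroup, which rests on the compatibility $\psi\circ\exp=\exp\circ\,\bpsi$ and on the connectedness of $H$. Everything else is a direct consequence of Propositions \ref{reachablesetprop} and \ref{propImgDer}, so the argument should be short.
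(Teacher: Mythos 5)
Your proposal is correct and takes essentially the same route as the paper: induction with base case $\mathcal{R}_1=F(U)\subset H$ via Proposition \ref{propImgDer}, the recursion $\mathcal{R}_{k+1}=\mathcal{R}_1\,\psi(\mathcal{R}_k)$, and the $\psi$-invariance of $H$ inherited from the $\bpsi$-invariance of $\mathfrak{h}$. Your explicit justification that $\psi(H)=H$ through $\psi\circ\exp=\exp\circ\,\bpsi$ and connectedness merely fills in a step the paper asserts in one line, so the two arguments are the same in substance.
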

\begin{proof}
First note that $\mathcal{R}_1$ is contained in $H$. In fact, $\mathcal{R}_1$ is the set of all
$\phi(1,e,u)=F(u(0)),$
thus $\mathcal{R}_1$ is the image of $F$. By the previous proposition, this is contained in $H$.

We now prove the proposition by induction. Since $\mathcal{R}_0=\{e\}\subset H$, it is trivial for $\mathcal{R}_0$. Assume that $\mathcal{R}_k\subset H$. Then,
$\mathcal{R}_{k+1}=\mathcal{R}_1\psi(\mathcal{R}_k).$

However, $H$ is invariant by $\psi$ as it is generated by the $\bpsi$ invariant algebra $\mathfrak{h}$, thus, $\psi(\mathcal{R}_k)\subset H$. By the previous argument, $\mathcal{R}_1$ is also contained in $H$, therefore their product must be contained in $H$.
\end{proof}

The proposition above shows that the equality $\mathfrak{h}=\mathfrak{g}$ is a necessary condition for accessibility of the system. In fact, if $\mathfrak{h}\neq\mathfrak{g}$ then $H$ is a lower dimension subgroup and has empty interior in $G$, therefore $\mathcal{R}\subset H$ also has empty interior in $G$.

The next definition is aimed in proving the other implication. For each $k\in\mathbb{N}$ let $\mathcal{F}(k,d)$ the set of diferentiable functions
$f:\mathbb{R}^d\rightarrow G$
such that $f(\mathbb{R}^d)\subset\mathcal{R}_k$. Let
$$\mathcal{F}_k=\bigcup_{d\in\mathbb{N}}\mathcal{F}(k,d) \,\,\,\, \mbox{and} \,\,\,\, \mathcal{F}=\bigcup_{k\in\mathbb{N}}\mathcal{F}_k.$$

Let $W\subset\mathfrak{g}$ a subspace such that $W$ is the image of $\hat df_0$ for some $f\in\mathcal{F}$ and $\dim(W)$ is maximal with this property. By the previous proposition $\mathcal{R}_k\subset H$ for all $k\in\mathbb{N}$, thus $W\subset\mathfrak{h}$. Our goal is to prove the equality.

For the next results, fix an element $f\in\mathcal{F}(k,d)$ such that $\hat df_0=W$.
\begin{lemma}
$W$ is invariant by $\bpsi$ and by $Ad(f(0))$.
\end{lemma}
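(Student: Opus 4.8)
The plan is to exploit the maximality of $\dim W$ by building from $f$ two-parameter auxiliary maps lying in $\mathcal{F}$, whose derivative at the origin has image $W+Ad(f(0))\bpsi^j(W)$; maximality will then force $Ad(f(0))\bpsi^j(W)=W$ for suitable $j$, and a short linear-algebra argument will separate the two invariances. The preliminary observation I would record is that $f\in\mathcal{F}(k,d)$ automatically lies in $\mathcal{F}(k+1,d)$ as well: by Proposition \ref{reachablesetprop} one has $\mathcal{R}_k\subset\mathcal{R}_{k+1}$, so $f(\mathbb{R}^d)\subset\mathcal{R}_k\subset\mathcal{R}_{k+1}$, while $f$ and the image $W$ of $\hat df_0$ are unchanged. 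This lets me run the same construction at the two consecutive times $k$ and $k+1$.

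For a fixed $j$ I would define $g_j:\mathbb{R}^{2d}\rightarrow G$ by $g_j(s,t)=f(s)\,\psi^j(f(t))$. Using Proposition \ref{reachablesetprop} (part 4, with $f_0=\psi$), $\mathcal{R}_{2j}=\mathcal{R}_j\,\psi^j(\mathcal{R}_j)$, and since $f(\mathbb{R}^d)\subset\mathcal{R}_j$ this gives $g_j(\mathbb{R}^{2d})\subset\mathcal{R}_{2j}$, so $g_j\in\mathcal{F}(2j,2d)\subset\mathcal{F}$. Next I would compute $\hat d(g_j)_0$. Because $\hat d$ is linear and built from the ordinary derivative, its image is the sum of the images of the two partial derivatives at the origin. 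Freezing $t=0$ gives $g_j(\cdot,0)=R_{\psi^j(f(0))}\circ f$, so the chain rule together with $\hat d(R_g)_h=I$ yields image $W$; freezing $s=0$ gives $g_j(0,\cdot)=L_{f(0)}\circ\psi^j\circ f$, so the chain rule together with $\hat d(L_g)_h=Ad(g)$ and $\hat d\psi_g=\bpsi$ (hence $\hat d(\psi^j)_g=\bpsi^j$) yields image $Ad(f(0))\bpsi^j(W)$. Therefore the image of $\hat d(g_j)_0$ is $W+Ad(f(0))\bpsi^j(W)$, which contains $W$. Since $\dim W$ is maximal among images of $\hat d(\cdot)_0$ over $\mathcal{F}$, this sum cannot exceed $\dim W$, so $Ad(f(0))\bpsi^j(W)\subset W$; as $Ad(f(0))\bpsi^j$ is a linear isomorphism, in fact $Ad(f(0))\bpsi^j(W)=W$.

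Applying this with $j=k$ and with $j=k+1$ (legitimate by the first paragraph) gives $Ad(f(0))\bpsi^k(W)=W=Ad(f(0))\bpsi^{k+1}(W)$. Composing with $Ad(f(0))^{-1}$ yields $\bpsi^k(W)=\bpsi^{k+1}(W)$, and applying the isomorphism $\bpsi^{-k}$ gives $\bpsi(W)=W$; substituting back into $Ad(f(0))\bpsi^k(W)=W$ then gives $Ad(f(0))(W)=W$. This establishes invariance of $W$ under both $\bpsi$ and $Ad(f(0))$. The step I expect to be the main obstacle is the careful justification of the derivative computation: verifying that the image of $\hat d(g_j)_0$ genuinely splits as the sum of the two partial images, and that the chain rule for $\hat d$ combines correctly with the translation and automorphism formulas $\hat d(R_g)_h=I$, $\hat d(L_g)_h=Ad(g)$ and $\hat d\psi_g=\bpsi$; alongside this, one must confirm that $g_j$ lands inside the single reachable set $\mathcal{R}_{2j}$ rather than merely a product of reachable sets, which is exactly what Proposition \ref{reachablesetprop} supplies.
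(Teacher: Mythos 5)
Your proposal is correct and follows essentially the same route as the paper: the same auxiliary maps $f(x)\psi^{j}(f(y))$ for $j=k,k+1$, the same computation of $\hat d(g_j)_0$ via $\hat d(R_g)=I$, $\hat d(L_g)=Ad(g)$, $\hat d\psi=\bpsi$, the same maximality-of-$\dim W$ argument, and the same final cancellation $\bpsi^{k}(W)=\bpsi^{k+1}(W)\Rightarrow\bpsi(W)=W\Rightarrow Ad(f(0))(W)=W$. The only (harmless) difference is bookkeeping: you promote $f$ to $\mathcal{F}(k+1,d)$ so the $j=k+1$ map lands in $\mathcal{R}_{2k+2}$, whereas the paper keeps the first factor at time $k$ and uses $\psi\mathcal{R}_k\subset\mathcal{R}_{k+1}$ to land in $\mathcal{R}_{2k+1}$.
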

\begin{proof}
Recall that $\phi(g,0,1)=\psi(g)$ for all $g\in G$. Thus, $\psi\mathcal{R}_k\subset\mathcal{R}_{k+1}$ for all $k\in\mathbb{N}$. Considering the maps
$$\tilde{f}:\mathbb{R}^d\times\mathbb{R}^d\rightarrow G, \tilde{f}(x,y)=f(x)\psi^kf(y), \mbox{ and } \tilde{f}_1:\mathbb{R}^d\times\mathbb{R}^d\rightarrow G \mbox{ with }
\tilde{f}_1(x,y)=f(x)\psi^{k+1}(f(y)),$$
then
$\tilde{f}(x,y)\in\mathcal{R}_k\psi^k\mathcal{R}_k=\mathcal{R}_{2k}$ and  $\tilde{f}_1(x,y)\in\mathcal{R}_k\psi^k\psi\mathcal{R}_k\subset\mathcal{R}_{2k+1}$
for all $(x,y)$, thus $\tilde{f},\tilde{f}_1\in\mathcal{F}$.

Note that for $x,y\in\mathbb{R}^d$,
$\tilde{f}(x,0)=f(x)g=R_g(f(x))$
where $g=\psi^kf(0)$, and
$\tilde{f}(0,y)=L_{f(0)}\circ\psi^k (f(y)).$

Thus, for all $x,y\in\mathbb{R}^n$
$\hat d(\tilde{f})_0(x,0)=\hat df_0(x)$
and
$\hat d(\tilde{f})_0(0,y)=Ad(f(0))\circ\bpsi^k\hat df_0(y).$

Similarly,
$\hat d(\tilde{f}_1)_0(x,0)=\hat df_0(x)$
and
$\hat d(\tilde{f}_1)_0(0,y)=Ad(f(0))\circ\bpsi^{k+1}\hat df_0(y).$

Therefore, the image of $\hat{d}(\tilde{f})_0$ coincides with the sum $W+V$ where
$V=Ad(f(0))\circ\bpsi^k(W).$
Since $\dim(W)$ is maximal, then $\dim(W+V)\le\dim(W)$, which implies $W+V=W$. Furthermore $\dim(V)=\dim(W)$ since the linear transformations above are all automorphisms, therefore $V=W$. Then,
$Ad(f(0))\circ\bpsi^k(W)=W.$

Applying a similar argument to $\tilde{f}_1$ one also concludes that
$Ad(f(0))\circ\bpsi^{k+1}(W)=W.$

Then
$Ad(f(0))\circ\bpsi^{k+1}(W)=Ad(f(0))\circ\bpsi^k(W)$ and hence
$\Rightarrow \bpsi(W)=W,$
since $Ad(f(0))$ and $\bpsi$ are invertible, which shows that $W$ is invariant by $\bpsi$. Also as a consequence, we have
$$W=Ad(f(0))\circ\bpsi^k(W)=Ad(f(0))(W).$$
\end{proof}

\begin{lemma}
$W$ is invariant by $Ad(g)$ for all $g\in\mathcal{R}$.
\end{lemma}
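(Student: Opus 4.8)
The plan is to imitate the construction of the preceding lemma, but now incorporating a fixed yet arbitrary element $g\in\mathcal{R}$. So I would fix $g\in\mathcal{R}$ and choose $k'\in\mathbb{N}$ with $g\in\mathcal{R}_{k'}$. The first step is to build the auxiliary map $\tilde f_g:\mathbb{R}^d\rightarrow G$, $\tilde f_g(y)=g\,\psi^{k'}(f(y))$, and to check it lies in $\mathcal{F}$: since $g\in\mathcal{R}_{k'}$ and $f(y)\in\mathcal{R}_k$, property 4 of Proposition \ref{reachablesetprop} (with $f_0=\psi$) gives $\tilde f_g(y)\in\mathcal{R}_{k'}\psi^{k'}(\mathcal{R}_k)=\mathcal{R}_{k'+k}$. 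Writing $\tilde f_g=L_g\circ\psi^{k'}\circ f$ and applying the chain rule for $\hat d$ together with $\hat d(L_g)=Ad(g)$ and $\hat d\psi^{k'}=\bpsi^{k'}$, the image of $\hat d(\tilde f_g)_0$ is $Ad(g)\bpsi^{k'}(W)=Ad(g)(W)$, where the last equality uses the $\bpsi$-invariance of $W$ established in the previous lemma.

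Next I would combine $f$ and $\tilde f_g$ into a two-variable map $\Phi:\mathbb{R}^d\times\mathbb{R}^d\rightarrow G$, $\Phi(x,y)=f(x)\,\psi^k(\tilde f_g(y))$. The same reachable-set computation gives $\Phi(x,y)\in\mathcal{R}_k\,\psi^k(\mathcal{R}_{k'+k})=\mathcal{R}_{2k+k'}$, so $\Phi\in\mathcal{F}$. Restricting to the coordinate axes, $\Phi(x,0)=R_c(f(x))$ with constant $c=\psi^k(\tilde f_g(0))$, whose derivative contributes image $W$ (since $\hat d(R_c)=I$), while $\Phi(0,y)=L_{f(0)}\circ\psi^k\circ\tilde f_g(y)$, whose derivative contributes image $Ad(f(0))\,\bpsi^k\,Ad(g)(W)$. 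Because partial derivatives are the restrictions of the total derivative to the coordinate factors, the image of $\hat d\Phi_0$ is the sum $W+Ad(f(0))\,\bpsi^k\,Ad(g)(W)$.

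The conclusion then follows from maximality exactly as before. As $\Phi\in\mathcal{F}$, the dimension of the image of $\hat d\Phi_0$ cannot exceed $\dim(W)$, forcing $Ad(f(0))\,\bpsi^k\,Ad(g)(W)\subseteq W$; since $Ad(f(0))$, $\bpsi$ and $Ad(g)$ are invertible, both sides have dimension $\dim(W)$, hence $Ad(f(0))\,\bpsi^k\,Ad(g)(W)=W$. Using the $Ad(f(0))$- and $\bpsi$-invariance of $W$ from the previous lemma to peel off $Ad(f(0))$ and then $\bpsi^k$, one obtains $Ad(g)(W)=W$. As $g\in\mathcal{R}$ was arbitrary, this proves $W$ is invariant under $Ad(g)$ for every $g\in\mathcal{R}$.

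I expect the only delicate point to be the bookkeeping of the $\hat d$ chain rule through the translations and the powers of $\psi$, together with verifying that the image of $\hat d\Phi_0$ really splits as the sum of its restrictions to the two coordinate factors, and keeping the time indices consistent so that each constructed map genuinely lands in some $\mathcal{R}_N$. Everything else is forced by the maximality of $\dim(W)$ and the invertibility of the operators involved, just as in the preceding lemma.
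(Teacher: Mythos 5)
Your proposal is correct and takes essentially the same approach as the paper: your composite map $\Phi(x,y)=f(x)\,\psi^k\bigl(g\,\psi^{k'}(f(y))\bigr)$ is exactly the paper's auxiliary function $f_1$ (with $l=k'$), and the maximality-plus-invertibility argument forcing $Ad(f(0))\circ\bpsi^k\circ Ad(g)(W)=W$, followed by peeling off $Ad(f(0))$ and $\bpsi^k$ via the previous lemma, matches the paper's proof step for step. The only difference is cosmetic: you build the map in two stages and invoke the $\bpsi$-invariance $\bpsi^{k'}(W)=W$ immediately, whereas the paper carries the factor $\bpsi^l$ along and removes it at the end.
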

\begin{proof}
Consider $g\in\mathcal{R}$. Then $g\in\mathcal{R}_l$ for some $l\in\mathbb{N}$. Define
$f_1:\mathbb{R}^d\times\mathbb{R}^d\rightarrow G$ bu $f_1(x,y)= f(x)\psi^k(g\psi^l(f(y))).$

Hence $f_1\in\mathcal{R}_{2k+l}$ and, for $x,y\in\mathbb{R}^d$,
$\hat d(f_1)_0(x,0)=$ $\hat df_0(x)$ and
$\hat d(f_1)_0(0,y)=$ $Ad(f(0))\circ \bpsi^k\circ Ad(g)\circ\bpsi^l\circ \hat df_0(y).$

We can conclude that the image of $d(f_1)_0$ is the vector subspace $W+V$ where 
$V=$ $Ad(f(0))\circ \bpsi^k\circ Ad(g)\circ\bpsi^l(W).$

Therefore, we get $V=W$ and
$W=$ $V=$ $Ad(f(0))\circ \bpsi^k\circ Ad(g)\circ\bpsi^l(W).$
Since $W$ is invariant by $Ad(f(0))$ and $\bpsi$, it is also invariant by the inverse of these automorphisms, we obtain that
$W=$ $\bpsi^{-k}\circ Ad(f(0))^{-1}(W)=Ad(g)\circ\bpsi^l(W)=$ $Ad(g)(W).$
\end{proof}
\begin{lemma}
$W$ is a subalgebra.
\end{lemma}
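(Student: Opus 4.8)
The plan is to show that $W$ is closed under the Lie bracket, i.e. that $[X,Y]\in W$ whenever $X,Y\in W$. The key tool is the previous lemma: $W$ is invariant under $Ad(g)$ for every $g\in\mathcal{R}$. The strategy is to exploit the well-known infinitesimal relation between the adjoint representation of the group and the bracket of the Lie algebra, namely
\begin{equation*}
    [X,Y]=\left.\frac{d}{dt}\right|_{t=0}Ad(\exp(tX))(Y).
\end{equation*}
If I could differentiate the $Ad$-invariance of $W$ along a curve of the form $t\mapsto\exp(tX)$ inside $\mathcal{R}$, the bracket $[X,Y]$ would land in $W$ automatically, since $W$ is a (closed) linear subspace and hence contains the limit of difference quotients of curves valued in $W$.

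First I would fix $X,Y\in W$. Since $W$ is the image of $\hat df_0$ for the fixed $f\in\mathcal{F}(k,d)$, and the image of $F$ (hence $\mathcal{R}_1$) generates $H$ whose algebra is $\mathfrak{h}\supset W$, I expect to produce, for each $X\in W$, a differentiable curve $c:(-\e,\e)\rightarrow\mathcal{R}$ (or into $\mathcal{R}_l$ for some fixed $l$) with $c(0)=e$ and $\hat d$-velocity $c'(0)=X$ in the sense of the new derivative. The natural candidate is to reuse the auxiliary maps built in the earlier lemmas: restricting $\tilde f$ or $f_1$ to a one-parameter sub-family, or composing $f$ with a line $t\mapsto tx$ in $\mathbb{R}^d$, yields a curve in some $\mathcal{R}_j$ realizing any prescribed element of $W$ as its $\hat d$-derivative at $0$. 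The point is that every such $X$ is tangent (in the $\hat d$ sense) to a curve lying entirely in $\mathcal{R}$.

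Next I would combine this with the preceding lemma. For the curve $c$ with $\hat d$-velocity $X$, every point $c(t)$ lies in $\mathcal{R}$, so $Ad(c(t))(Y)\in W$ for all $t$. Thus $t\mapsto Ad(c(t))(Y)$ is a curve lying in the subspace $W$, and its derivative at $t=0$ also lies in $W$ because $W$ is closed. The remaining task is purely computational: to verify that this derivative equals $[X,Y]$ (up to sign), using the definition (\ref{liebracket}) of the bracket together with the translation formulas $\hat d(L_g)_h=Ad(g)$ and $\hat d(R_g)_h=I$ established in the preamble, which relate $Ad$ along a curve to the bracket of the corresponding right-invariant vector fields. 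Concluding $[X,Y]\in W$ for all $X,Y\in W$ then shows $W$ is a subalgebra.

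The main obstacle I anticipate is the bookkeeping in the second step: manufacturing, for an arbitrary element of $W$, an honest differentiable curve inside $\mathcal{R}$ whose $\hat d$-derivative is that element, while keeping everything within a single reachable set $\mathcal{R}_l$ so that the $Ad$-invariance lemma applies uniformly. The subtlety is that $\mathcal{R}$ is a union of the nested sets $\mathcal{R}_k$, and the curve realizing $X$ and the element $Y$ may a priori come from different levels; I would handle this by passing to a large enough common $l$ using the monotonicity $\mathcal{R}_{\tau_1}\subset\mathcal{R}_{\tau_2}$ and the product formula $\mathcal{R}_{\tau_1+\tau_2}=\mathcal{R}_{\tau_1}f_0^{\tau_1}(\mathcal{R}_{\tau_2})$ from Proposition \ref{reachablesetprop}, exactly as the $\tilde f$ and $f_1$ constructions already do. Once the curve is in place, matching its $Ad$-derivative to the bracket should be a direct, if slightly delicate, application of the definitions.
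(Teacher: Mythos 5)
Your core mechanism is the paper's own: fix $X,Y\in W$, run a curve $c$ inside $\mathcal{R}$, note that $t\mapsto Ad(c(t))(\cdot)$ stays in the (finite-dimensional, hence closed) subspace $W$ by the $Ad$-invariance lemma, and identify the derivative at $t=0$ with a bracket. But the proposal hinges on a step that fails as stated, precisely the one you flag as the main obstacle: producing a differentiable curve in $\mathcal{R}$ with $c(0)=e$ \emph{and} $\hat d$-velocity an arbitrary prescribed $X\in W$. None of the constructions you cite delivers this. Composing $f$ with a line $t\mapsto ty$ gives a curve through $f(0)$, which need not be $e$ (elements of $\mathcal{F}$ carry no normalization at $0$); right-translating by $f(0)^{-1}$ preserves the $\hat d$-velocity (since $\hat d(R_g)=I$) but the translated curve lands in $\mathcal{R}_k f(0)^{-1}$, which is not contained in any reachable set, so the $Ad$-invariance lemma no longer applies to it; left-translating exits $\mathcal{R}$ as well and moreover twists the velocity by $Ad(f(0)^{-1})$. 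Curves through $e$ that genuinely stay in $\mathcal{R}$, such as $t\mapsto F(\alpha_1(t))\psi(F(\alpha_2(t)))\cdots\psi^{j}(F(\alpha_{j+1}(t)))$ with $\alpha_i(0)=0$, only realize velocities in the span of the subspaces $\bpsi^{i}(\hat dF_0(\mathbb{R}^m))$, and at this stage of the lemma sequence there is no argument that this exhausts $W$ (ultimately $W=\mathfrak{h}$ is bracket-generated, so it generally does not); insisting on basepoint $e$ therefore leaves a genuine hole, arguably a circular one.

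The paper repairs exactly this point by a conjugation correction instead of a basepoint normalization. Keep the curve $c(t)=f(ty)$ through $f(0)$, where $y$ is chosen with $\hat df_0(y)=Y$, and pre-adjust the fixed vector: set $Z=Ad(f(0))^{-1}(X)$, which lies in $W$ because $W$ is $Ad(f(0))$-invariant (first lemma; alternatively $f(0)\in\mathcal{R}_k\subset\mathcal{R}$ and the second lemma applies). For $F_Z(g)=Ad(g)(Z)$ one computes
\begin{equation*}
    \hat d(F_Z)_g(Y)=\left.\frac{d}{dt}\right|_{t=0}Ad(e^{tY}g)(Z)=ad(Y)(Ad(g)(Z)),
\end{equation*}
so the curve $t\mapsto Ad(f(ty))(Z)$ takes values in $W$ and its derivative at $t=0$ is $ad(Y)(Ad(f(0))(Z))=ad(Y)(X)\in W$. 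With this single repair --- trading your condition $c(0)=e$ for the $Ad(f(0))^{-1}$ correction --- your outline becomes the paper's proof verbatim; the final ``match the $Ad$-derivative to the bracket'' step you defer is just the displayed computation, with no further delicacy hidden in it.
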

\begin{proof}
For each $X\in\mathfrak{g}$ consider the function
$F_X:G\rightarrow \mathfrak{g}$ ,
$g\mapsto Ad(g)(X).$

Taking $\mathfrak{g}\simeq\mathbb{R}^n$ as its own Lie algebra, for all $Y\in\mathfrak{g}$ and $g\in G$ we have
$$\hat d(F_X)_g(Y)=\left.\frac{d}{dt}\right|_{t=0}Ad(e^{tY}g)(X)=
\left.\frac{d}{dt}\right|_{t=0}e^{ad(tY)}Ad(g)(X)=ad(Y)(Ad(g)(X)).$$

Now let $X,Y\in W$, $y\in\mathbb{R}^d$ such that $\hat df_0(y)=Y$, and $Z=Ad(f(0))^{-1}(X)$. Note that $Z\in W$ as $W$ is invariant by $Ad(f(0))$. Consider the curve
$c:\mathbb{R}\rightarrow \mathfrak{g}$ with
$c(t)=F_Z(f(ty))=Ad(f(ty))(Z).$

The image of $c$ is contained in $W$ since $f(ty)\in\mathcal{R}$ for all $t\in\mathbb{R}$. Thus, the image of $c'$ must also be contained in $W$. Furthermore,
$c'(0)=$ $\hat d(F_Z)_{f(0)}\hat df_0(y)=$ $ad(Y)(Ad(f(0))(Z))=ad(Y)(X).$

Consequently, we get $ad(Y)(X)\in W$. Since $X,Y\in W$ are arbitrary, we conclude that $W$ is a subalgebra. 
\end{proof}

\begin{lemma}
For all $\tilde f\in\mathcal{F}$, $W$ contains the image of $\hat d(\tilde f)_0$.
\end{lemma}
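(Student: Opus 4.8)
The plan is to leverage the maximality of $\dim(W)$ by merging an arbitrary $\tilde f\in\mathcal{F}$ with the fixed map $f$ into a single element of $\mathcal{F}$ whose derivative at the origin has image $W+V'$, where $V'$ is the image of $\hat d(\tilde f)_0$ transported by an automorphism. Concretely, suppose $\tilde f\in\mathcal{F}(l,d')$, so that $\tilde f(\mathbb{R}^{d'})\subset\mathcal{R}_l$. Imitating the device of the previous three lemmas, I would set
$$\hat f:\mathbb{R}^d\times\mathbb{R}^{d'}\rightarrow G,\qquad \hat f(x,y)=f(x)\,\psi^k\bigl(\tilde f(y)\bigr).$$
Since $\hat f(x,y)\in\mathcal{R}_k\,\psi^k(\mathcal{R}_l)=\mathcal{R}_{k+l}$ for all $(x,y)$ (item 4 of Proposition \ref{reachablesetprop}, with $\psi=f_0$), we have $\hat f\in\mathcal{F}$, and therefore the image of $\hat d\hat f_0$ is constrained by the maximality hypothesis on $\dim(W)$.

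Next I would compute the two coordinate restrictions of $\hat f$ at $0$ using the derivative rules recorded in the examples. On the first slice, $\hat f(x,0)=R_g(f(x))$ with $g=\psi^k(\tilde f(0))$, and since $\hat d(R_g)$ is the identity, $\hat d\hat f_0(x,0)=\hat df_0(x)$, whose image is $W$. On the second slice, $\hat f(0,y)=L_{f(0)}\circ\psi^k\circ\tilde f(y)$, so by the chain rule together with $\hat d(L_{f(0)})=Ad(f(0))$ and $\hat d\psi^k=\bpsi^k$ we get $\hat d\hat f_0(0,y)=Ad(f(0))\circ\bpsi^k\circ\hat d(\tilde f)_0(y)$. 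By linearity of $\hat d\hat f_0$ its image is the sum $W+V'$, where $V'=Ad(f(0))\circ\bpsi^k\bigl(\imag\,\hat d(\tilde f)_0\bigr)$.

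Finally I would invoke maximality: as $\hat f\in\mathcal{F}$ and $\dim(W)$ is maximal among images at $0$ of elements of $\mathcal{F}$, we must have $\dim(W+V')\le\dim(W)$, forcing $V'\subset W$. Since $Ad(f(0))$ and $\bpsi$ are automorphisms leaving $W$ invariant (by the earlier lemmas), so does $\bpsi^{-k}\circ Ad(f(0))^{-1}$, and applying it gives
$$\imag\,\hat d(\tilde f)_0=\bpsi^{-k}\circ Ad(f(0))^{-1}(V')\subset\bpsi^{-k}\circ Ad(f(0))^{-1}(W)=W,$$
which is the claim. The only genuine subtlety is the bookkeeping that $\hat f$ lands in a single reachable set $\mathcal{R}_{k+l}$, so that $\hat f\in\mathcal{F}$ and maximality indeed applies, and the careful use of the chain rule for $\hat d$ on each slice; the rest is a direct transcription of the mechanism already set up in the preceding lemmas. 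Combined with the earlier inclusion $W\subset\mathfrak{h}$, this lemma is exactly what is needed to conclude $W=\mathfrak{h}$ and thus complete the proof of Theorem \ref{accessibilityCondition}.
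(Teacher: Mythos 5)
Your proof is correct and takes essentially the same approach as the paper: the paper's $f_2(x,y)=\tilde f(x)\psi^l(f(y))$ puts $\tilde f$ in the first factor and uses the invariance of $W$ under $Ad(\tilde f(0))$ and $\bpsi$ to read the image as $V+W$ directly, whereas you put $f$ first and strip the conjugation $Ad(f(0))\circ\bpsi^{k}$ at the end using the invariances established in the first lemma. This is the same concatenation-plus-maximality mechanism, so the two arguments differ only in the ordering of factors.
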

\begin{proof}
Let $\tilde f\in\mathcal{F}(l,r)$. Define
$f_2:\mathbb{R}^r\times\mathbb{R}^d$
by $f_{2}(x,y)= \tilde f(x)\psi^{l}(f(y)).$

Then, for $x\in\mathbb{R}^r$ and $y\in\mathbb{R}^d$ we get that 
$\hat d(f_2)_0(x,0)=$ $\hat d(\tilde f)(x)$ and $\hat d(f_2)_0(0,y)=$ $Ad(\tilde f(0))\bpsi^l\hat df_0(y).$

If $V$ denotes the image of $\hat d(\tilde f)_0$, we obtain that the image of $\hat d(f_2)_0$ coincides with the sum
$V+Ad(\tilde f(0))\bpsi^d(W)=$ $V+W.$
Since the dimension of $W$ is maximal, $V$ must be contained in $W$.
\end{proof}

\begin{lemma}
For all $x\in U$ and $y\in\mathbb{R}^m$, $W$ contains  $\hat dF_x(y)$.
\end{lemma}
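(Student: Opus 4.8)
The plan is to deduce this statement from the previous lemma, which already controls the image of $\hat d(\tilde f)_0$ at the base point $0$ for every $\tilde f\in\mathcal{F}$. The obstruction is twofold: here the base point is an arbitrary $x\in U$ rather than $0$, and $F$ is only defined on $U$, whereas membership in $\mathcal{F}$ requires a function defined on all of $\mathbb{R}^m$. I would therefore manufacture, for each fixed $x\in U$, an element of $\mathcal{F}$ whose $\hat d$-derivative at $0$ has exactly the same image as $\hat dF_x$, and then invoke the previous lemma.

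Concretely, since $U$ is open, I would choose $\varepsilon>0$ with $B(x,\varepsilon)\subset U$ together with a diffeomorphism $\rho:\mathbb{R}^m\rightarrow B(x,\varepsilon)$ satisfying $\rho(0)=x$ and $d\rho_0$ invertible; for instance $\rho(y)=x+\varepsilon\, y/\sqrt{1+|y|^2}$, for which $d\rho_0=\varepsilon I$. Setting $g=F\circ\rho:\mathbb{R}^m\rightarrow G$, I would use that the image of $F$ is exactly $\mathcal{R}_1$ (as recorded in the proof of Proposition \ref{proposition2}) to conclude $g(\mathbb{R}^m)=F(B(x,\varepsilon))\subset F(U)=\mathcal{R}_1$, so that $g\in\mathcal{F}(1,m)\subset\mathcal{F}$.

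It then remains to compute $\hat dg_0$. By the chain rule for $\hat d$, and since $\rho$ is a map between open subsets of Euclidean spaces so that $\hat d\rho_0$ agrees with the usual Fréchet derivative $d\rho_0$ (see the earlier Remark), one obtains $\hat dg_0=\hat dF_{\rho(0)}\circ d\rho_0=\hat dF_x\circ d\rho_0$. Because $d\rho_0$ is invertible, the image of $\hat dg_0$ equals $\hat dF_x(\mathbb{R}^m)$. Applying the previous lemma to $g\in\mathcal{F}$ gives that $W$ contains the image of $\hat dg_0$, hence $\hat dF_x(\mathbb{R}^m)\subset W$, and in particular $\hat dF_x(y)\in W$ for every $y\in\mathbb{R}^m$.

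The only point requiring care is the bookkeeping that $g$ really lands in a single reachable set $\mathcal{R}_1$ and is globally defined and smooth on $\mathbb{R}^m$, which is handled by composing with a global diffeomorphism $\rho$ onto a small ball contained in $U$, together with the validity of the chain rule for the nonstandard derivative $\hat d$, already granted in the paper. No genuinely new estimate is needed: the entire content lies in transporting the preceding lemma from the base point $0$ to an arbitrary $x\in U$ by reparametrization.
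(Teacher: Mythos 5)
Your proof is correct and takes essentially the same approach as the paper: the paper also reparametrizes by a diffeomorphism onto a small subset of $U$ sending $0$ to $x$ (a curve $c(t)=F(x+\alpha(t)y)$ with $\alpha:\mathbb{R}\rightarrow(-\varepsilon,\varepsilon)$ a diffeomorphism), notes the image lies in $\mathcal{R}_1=F(U)$, and invokes the previous lemma together with the chain rule for $\hat d$. The only difference is cosmetic: the paper uses a one-dimensional curve for each fixed direction $y$, while your $m$-dimensional diffeomorphism $\rho$ captures the whole image $\hat dF_x(\mathbb{R}^m)$ in a single application of the lemma.
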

\begin{proof}
Consider the curve
$c:\mathbb{R}\rightarrow G$, given by 
$c(t)=$ $F(x+\alpha(t)y)$
where $\alpha$ is a diffeomorfism from $\mathbb{R}$ to $(-\varepsilon,\varepsilon)$ and $\varepsilon$ is sufficiently small so that $x+(-\varepsilon,\varepsilon)y\in U$. Then the image of $c$ is contained in $\mathcal{R}_1$ and
$\hat d c_0=$ $\alpha'(0)\hat dF_x(y).$
$\alpha'(0)$ is nonzero since $\alpha$ is a diffeomorphism, therefore the image of $\hat dc_0$ is the subspace spanned by $\hat dF_x(y)$. By the previous proposition, $W$ contains this subspace and, therefore, contains $\hat dF_x(y)$.
\end{proof}

The previous lemmas show that $W$ is invariant by $\bpsi$, is a sub-álgebra, and contains the image of $B$. Therefore, $\mathfrak{h}\subset W$. It was seen that the other inclusion must also be true, therefore $W=\mathfrak{h}$. Consequently, there exists $f\in\mathcal{F}$ such that $\mathfrak{h}$ is the image of $\hat df_0$.

We can now prove Theorem \ref{accessibilityCondition}.

\begin{proof}[Proof \textbf{(Theorem \ref{accessibilityCondition})}:]
It was seen previously that the equality $\mathfrak{h}=\mathfrak{g}$ is necessary for accessibility. For the other implication, assume that $\mathfrak{h}=\mathfrak{g}$. Then, from the previous lemmas, there is $f\in\mathcal{F}_{k,d}$ such that $\hat df_0$ is surjective. Consequently, by the submersion Theorem, $f(0)$ is in the interior of the image of $f$, and, then,
$f(0)\in$ $Int(f(\mathbb{R}^d))\subset$ $Int(\mathcal{R}_k).$
Therefore, $Int(\mathcal{R}_k)$ is nonempty.
\end{proof}

\noindent{\bf $U$ -  compact and convex containing the origin in its interior}. Now we assume this condition to prove the accessibility criteria. The next lemma allows us to apply the previous results such $U$.

\begin{lemma}
Let $c:[0,1]\rightarrow G$ a continuous curve on a Lie group $G$, $H\subset G$ a connected Lie subgroup and assume that $c(t)\in H$ for all $t<1$. Then $c(1)\in H$.
\end{lemma}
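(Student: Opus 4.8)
The plan is to exploit the fact that the cosets of $H$ foliate $G$, and to control how a continuous curve can enter a foliation chart near the endpoint $c(1)$. The essential difficulty is that a connected Lie subgroup need not be closed or embedded in $G$: it may even be dense, as with an irrational winding on a torus. Consequently one cannot simply invoke closedness of $H$ and pass to the limit. Instead I would use the local leaf structure of $H$ together with a second-countability argument to show that the transverse coordinate of $c$ is locally constant near $t=1$.

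First I would introduce the left-invariant involutive distribution $\Delta_g=d(L_g)_e(\mathfrak{h})$ determined by the Lie algebra $\mathfrak{h}$ of $H$. By the Frobenius theorem this distribution is integrable, its maximal integral manifolds (the leaves) are exactly the cosets of $H$, and the leaf through the identity is $H$ itself. Around the point $c(1)$ I would choose a foliation chart: a neighborhood $V$ of $c(1)$ together with a diffeomorphism $\phi:V\to B^r\times B^d$, where $r=\dim H$ and $d=\dim G-\dim H$, whose plaques $\phi^{-1}(B^r\times\{y\})$ are precisely the connected pieces of the leaves lying inside $V$.

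The key step is a countability observation. Since $G$ is a connected Lie group it is second countable, and hence so is the immersed submanifold $H$. The preimage of $V$ under the inclusion immersion $H\hookrightarrow G$ is then an open subset of $H$ whose connected components, being a disjoint open cover of a second-countable space, are at most countably many; each maps onto one plaque. Thus $H\cap V\subset\phi^{-1}(B^r\times S)$ for some countable set $S\subset B^d$. By continuity I would fix $\delta>0$ with $c([1-\delta,1])\subset V$ and consider the continuous curve $\gamma=\pi\circ\phi\circ c$ on $[1-\delta,1]$, where $\pi$ is the projection onto $B^d$. For $t<1$ we have $c(t)\in H\cap V$, so $\gamma(t)\in S$; since $\gamma$ restricted to the connected interval $[1-\delta,1)$ is continuous with values in the countable set $S$, its image is a single point $y_0$, because a connected subset of $\mathbb{R}^d$ with more than one point is uncountable. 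Continuity at $t=1$ then forces $\gamma(1)=y_0$, so $c(1)\in\phi^{-1}(B^r\times\{y_0\})$.

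Finally I would identify this plaque's leaf with $H$. The plaque $\phi^{-1}(B^r\times\{y_0\})$ lies in a single leaf $L$, and it contains the points $c(t)\in H$ for $t\in[1-\delta,1)$; since distinct leaves are disjoint and $H$ is itself a leaf, we must have $L=H$, whence $c(1)\in L=H$. I expect the main obstacle to be precisely the non-embeddedness of $H$, which is handled by the second-countability and plaque-counting argument that forces the transverse coordinate of $c$ to be locally constant; the remainder is a routine application of the Frobenius theorem and the intermediate value theorem.
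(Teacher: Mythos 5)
Your proof is correct, and it follows the same skeleton as the paper's: a Frobenius foliation chart around $c(1)$ whose plaques are open pieces of cosets of $H$, plus a connectedness argument forcing the tail $c((1-\varepsilon,1))$ into a single plaque, whose transverse coordinate must then agree with that of $c(1)$. Where you differ is in the mechanism for the key step. The paper asserts that any plaque meeting $H$ is ``open and closed in $H\cap V_1$'' and concludes directly by connectedness; but openness there holds only in the \emph{intrinsic} (leaf) topology of $H$, not in the subspace topology inherited from $G$ --- for a non-embedded, dense subgroup such as an irrational winding on the torus, every relative neighborhood of a point of one plaque meets other plaques, and it is exactly this non-embedded case that makes the lemma nontrivial. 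Since $c$ is only assumed continuous into $G$, the subspace topology is what one actually has, so the paper's clopen claim needs supplementation. Your second-countability argument supplies precisely the missing ingredient: $H$ meets the chart in at most countably many plaques, so the transverse projection $\gamma=\pi\circ\phi\circ c$ is a continuous map of an interval into a countable subset of $\mathbb{R}^d$ and hence constant, which is the standard ``leaves are weakly embedded'' argument. In short, your route costs one extra observation (countability of components of $H\cap V$) but buys a proof that is watertight in the dense-subgroup case, whereas the paper's version is shorter and implicitly works in the leaf topology without saying so. One small point to make explicit if you write this up: each connected component of the open set $H\cap V$ in the leaf topology is a connected integral manifold of the distribution inside $V$ and is therefore contained in a single plaque; that is the bridge from countably many components to the countable set $S$ of transverse coordinates.
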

\begin{proof}
Let $r=\dim(H)$ and $n=r+d=\dim(G)$. Then there is a local chart
$\phi:V_1\subset G\rightarrow V_2\subset\mathbb{R}^r\times\mathbb{R}^d$
centered in $c(1)$ ($\phi(c(1))=0$) such that, for each $y\in\mathbb{R}^d$, the inverse image $\phi^{-1}(\mathbb{R}^r\times\{y\})$ is an open set of some cosset $Hg$. Thus, if $(x,y)\in V_2$ is such that $\phi^{-1}(x,y)\in H$ then $\phi^{-1}(\mathbb{R}^r\times y)$ is open and closed in $H\cap V_1$.

Since $c(1)\in V_1$ and $c$ is continuous, there is $\varepsilon>0$ such that $c(1-\varepsilon,1)\subset V_1$. Since $(1-\varepsilon,1)$ is connected, the previous argument implies that $c(1-\varepsilon,1)$ is entirely contained in a set $\phi^{-1}(\mathbb{R}^r\times y)$ for some $y\in\mathbb{R}^d$, and, since $\phi(c(1))=0$, then $y=0$, which implies that $c(1)\in H$.
\end{proof}
\begin{corollary}
The smallest subgroup containing $F(U)$ coincides with the smallest subgroup containing $F(Int(U))$.
\end{corollary}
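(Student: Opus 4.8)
The plan is to deduce the equality by establishing the only nontrivial inclusion. Write $H_1$ for the smallest subgroup containing $F(\mathrm{Int}(U))$ and $H_2$ for the smallest subgroup containing $F(U)$. Since $\mathrm{Int}(U)\subset U$ we have $F(\mathrm{Int}(U))\subset F(U)\subset H_2$, and therefore $H_1\subset H_2$. It remains to prove that $F(U)\subset H_1$, for this forces $H_2\subset H_1$ and hence the two subgroups coincide.

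First I would record that $H_1$ is a connected Lie subgroup, so that the previous lemma can be applied to it. Indeed, $\mathrm{Int}(U)$ is convex, hence connected, and $F$ is continuous, so $F(\mathrm{Int}(U))$ is a connected set containing $F(0)=f_0(e)=e$. The smallest subgroup it generates is then path-connected and, therefore, a Lie subgroup, exactly as argued above for $H^*$.

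The key geometric input is a standard convexity fact: if $x\in U$ and $0\in\mathrm{Int}(U)$, then $tx\in\mathrm{Int}(U)$ for every $t\in[0,1)$. To see this, choose $r>0$ with $B(0,r)\subset U$ and check that $B(tx,(1-t)r)\subset U$, by writing an arbitrary point of that ball as the convex combination $t\,x+(1-t)w$ with $w\in B(0,r)\subset U$ and using that $x\in U$. Granting this, fix $x\in U$ and define the curve $c:[0,1]\rightarrow G$ by $c(t)=F(tx)$, which is continuous since $F$ is $\mathcal{C}^\infty$. For every $t<1$ we have $tx\in\mathrm{Int}(U)$, so $c(t)=F(tx)\in F(\mathrm{Int}(U))\subset H_1$; the previous lemma then yields $c(1)=F(x)\in H_1$.

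Since $x\in U$ was arbitrary, this shows $F(U)\subset H_1$, whence $H_2\subset H_1$ and the two subgroups agree. I do not expect a serious obstacle here: the entire content beyond bookkeeping is the convexity observation that the half-open segment from the interior point $0$ to any point of $U$ stays in $\mathrm{Int}(U)$, together with the continuity-at-the-boundary argument already packaged in the previous lemma; the only point requiring care is verifying that $H_1$ is connected so that the lemma genuinely applies.
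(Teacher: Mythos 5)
Your proof is correct and follows essentially the same route as the paper: the paper likewise takes, for each $x\in\partial U$, a continuous curve ending at $x$ that stays in $\mathrm{Int}(U)$ for $t<1$ (implicitly the radial segment your convexity lemma justifies) and applies the preceding lemma to conclude $F(x)$ lies in the subgroup generated by $F(\mathrm{Int}(U))$. Your write-up merely makes explicit two details the paper leaves tacit --- the verification that $t\mapsto tx$ stays in $\mathrm{Int}(U)$ for $t\in[0,1)$, and that $H_1$ is a connected Lie subgroup so the lemma genuinely applies --- both of which are correct.
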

\begin{proof}
Since $U$ is convex and has nonempty interior, for each $x$ in the boundary $\partial(U)$ there is a curve continuous curve $c:[0,1]\rightarrow U$ such that $c(1)=x$ and $c(t)\in Int(U)$ for all $t<1$. By the previous proposition, $F(x)$ is contained in the smallest sub-group containing $F(Int(U))$.
\end{proof}

Then, for the case where $U$ is convex and compact containing the origin in the interior, we must assume that $F$ is differentiable in $Int(U)$ and continuous in $U$. In this case, we define $\mathfrak{h}\subset\mathfrak{g}$ as the smallest $\bpsi$ invariant sub-algebra containing the images $\hat dF_x(\mathbb{R}^m)$ for $x\in Int(U)$. By the above proposition combined with a previous argument, $\mathfrak{h}$ is equivalently defined as the smallest $\bpsi$ invariant sub-algebra containing $\mathfrak{h}^*$, where $\mathfrak{h}^*$ is the sub-algebra associated to the smallest subgroup containing the image of $F$. In this case we have a similar result:

\begin{theorem}
\label{howToApplyToCompact}
The system is accessible if, and only if, $\mathfrak{h}=\mathfrak{g}$.
\end{theorem}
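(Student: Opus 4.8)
The plan is to reduce the compact--convex case to the already established open case, Theorem \ref{accessibilityCondition}, by passing to the open control range $\hbox{int}\,U$. The essential point is that the subalgebra $\mathfrak{h}$ does not change when $U$ is replaced by $\hbox{int}\,U$: in both situations $\mathfrak{h}$ is characterized as the smallest $\bpsi$ invariant subalgebra containing $\mathfrak{h}^*$, and by the preceding corollary the smallest subgroup containing $F(U)$ coincides with the smallest subgroup containing $F(\hbox{int}\,U)$, so the two associated subalgebras $\mathfrak{h}^*$ agree. Since $U$ is convex with $0\in\hbox{int}\,U$, the set $\hbox{int}\,U$ is a connected open neighborhood of the origin and $F$ is differentiable there, so Theorem \ref{accessibilityCondition} applies verbatim to the system whose control range is $\hbox{int}\,U$.

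For sufficiency, suppose $\mathfrak{h}=\mathfrak{g}$. Let $\mathcal{R}^{\circ}$ denote the reachable set of the restricted system with control range $\hbox{int}\,U$. By the previous paragraph the subalgebra attached to this restricted system is again $\mathfrak{h}=\mathfrak{g}$, so Theorem \ref{accessibilityCondition} yields $\hbox{int}\,\mathcal{R}^{\circ}\neq\emptyset$. Because $\hbox{int}\,U\subset U$, every trajectory of the restricted system is a trajectory of the original one, whence $\mathcal{R}^{\circ}\subset\mathcal{R}$ and therefore $\hbox{int}\,\mathcal{R}\neq\emptyset$. Lemma \ref{lemmaaccessibility} then gives accessibility of the original system.

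For necessity I would show that $\mathcal{R}\subset H$, where $H$ is the connected subgroup generated by $\mathfrak{h}$. Applying Proposition \ref{propImgDer} to the open connected neighborhood $\hbox{int}\,U$ gives $F(\hbox{int}\,U)\subset H$; the preceding corollary then forces the smallest subgroup containing $F(U)$ to be contained in $H$, so $\mathcal{R}_1=F(U)\subset H$. With this initial step the induction in the proof of Proposition \ref{proposition2}, using $\mathcal{R}_{k+1}=\mathcal{R}_1\psi(\mathcal{R}_k)$ together with the $\psi$ invariance of $H$, goes through unchanged and yields $\mathcal{R}_k\subset H$ for every $k$, hence $\mathcal{R}\subset H$. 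If $\mathfrak{h}\neq\mathfrak{g}$ then $H$ is a proper subgroup of strictly smaller dimension, so it has empty interior in $G$; thus $\mathcal{R}$ has empty interior and the system fails to be accessible. This proves that accessibility forces $\mathfrak{h}=\mathfrak{g}$.

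The only delicate point, and the step I expect to be the main obstacle, is that $F$ is merely continuous on the boundary $\partial U$, so neither the derivative based definition of $\mathfrak{h}$ nor Proposition \ref{propImgDer} can be invoked directly at boundary controls. This is exactly what the curve continuation lemma and its corollary are designed to circumvent: they transfer the containment $F(\hbox{int}\,U)\subset H$ across the boundary at the level of the generated subgroups, and the two inclusions $\mathcal{R}^{\circ}\subset\mathcal{R}\subset H$ then sandwich the conclusion between the open case and the subgroup bound.
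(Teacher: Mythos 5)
Your proposal is correct and follows essentially the same route as the paper's proof: necessity by showing $\mathcal{R}\subset H$ via the induction of Proposition \ref{proposition2} (with the curve-continuation corollary handling boundary controls, exactly the role the paper assigns it), and sufficiency by restricting controls to $\hbox{int}\,U$, noting the subalgebra $\mathfrak{h}$ is unchanged, and invoking Theorem \ref{accessibilityCondition}. You merely spell out details (the equality of the two subalgebras, the inclusion $\mathcal{R}^{\circ}\subset\mathcal{R}$, and the appeal to Lemma \ref{lemmaaccessibility}) that the paper's terse proof leaves implicit.
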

\begin{proof}
A similar argument to Proposition \ref{proposition2} shows that $\mathcal{R}$ is contained in the subgroup $H$ generated by $\mathfrak{h}$, therefore, if $\mathfrak{h}\neq\mathfrak{g}$ then the system is not accessible. If $\mathfrak{h}=\mathfrak{g}$ then we can consider the system with controls restricted to $Int(U)$. This system is accessible by the previous arguments, and, therefore, the original system is also accessible.
\end{proof}




The next result shows a version of the ad-rank Theorem (see Theorem 3.5 in \cite{AyalaAndTirao}), adapted for the discrete case.   

\begin{theorem}\label{adrank} (Ad-rank) If the system (\ref{discreteLinearSystem}) satisfies the ad-rank condition, then $e \in \hbox{int}(\mathcal{R}_n)$, where $n=\dim(\mathfrak{g})$. In particular, the system is locally controllable in $e$.
\end{theorem}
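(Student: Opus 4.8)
The plan is to realize $\mathcal{R}_n$ as the image of a single explicit smooth map and then apply the submersion theorem at the origin. Iterating item 4 of Proposition \ref{reachablesetprop} gives $\mathcal{R}_n=\mathcal{R}_1\,\psi(\mathcal{R}_1)\cdots\psi^{n-1}(\mathcal{R}_1)$, and since $\mathcal{R}_1=F(U)$ this means $\mathcal{R}_n$ is exactly the image of the map $\Phi:U^n\rightarrow G$ defined by $\Phi(x_0,\ldots,x_{n-1})=F(x_0)\,\psi(F(x_1))\cdots\psi^{n-1}(F(x_{n-1}))$, which is smooth on $\mathrm{int}(U)^n$. Because $\psi=f_0$ is an automorphism we have $F(0)=f_0(e)=e$ and $\psi^j(e)=e$, so $\Phi(0,\ldots,0)=e$. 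As $0\in\mathrm{int}(U)$, the point $(0,\ldots,0)$ is interior to $U^n$, so it suffices to show that $\Phi$ is a submersion there: the submersion theorem then places $e=\Phi(0,\ldots,0)$ in the interior of $\Phi(U^n)\subset\mathcal{R}_n$.

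Next I would compute $\hat d\Phi_{(0,\ldots,0)}$. The key simplification is that if only the $j$-th block of variables is varied while the rest are held at $0$, then every factor except the $j$-th collapses to $e$, so $\Phi(0,\ldots,x_j,\ldots,0)=\psi^j(F(x_j))$. Hence the directional derivative of $\Phi$ along the $j$-th block is $\hat d(\psi^j\circ F)_0$, and by the chain rule for $\hat d$ together with the identity $\hat d\psi_g=\bpsi$ (and thus $\hat d(\psi^j)_g=\bpsi^j$) established before Definition \ref{adrankdef}, this equals $\bpsi^j\circ\hat dF_0$. By linearity of $\hat d\Phi_0$ on $\mathbb{R}^{mn}=\bigoplus_{j=0}^{n-1}\mathbb{R}^m$, its image is the sum of the images of these blocks:
\[
\mathrm{im}\bigl(\hat d\Phi_0\bigr)=\sum_{j=0}^{n-1}\bpsi^{j}\bigl(\mathrm{im}(\hat dF_0)\bigr).
\]

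Then I would identify this image with $\mathfrak{g}$. Writing $W_0=\mathrm{im}(\hat dF_0)$, Cayley--Hamilton applied to the endomorphism $\bpsi$ of the $n$-dimensional space $\mathfrak{g}$ expresses $\bpsi^n$ as a linear combination of $I,\bpsi,\ldots,\bpsi^{n-1}$, so $\sum_{j=0}^{n-1}\bpsi^{j}(W_0)$ is already $\bpsi$ invariant and therefore equals the smallest $\bpsi$ invariant subspace containing $W_0$; this is precisely the image of the Kalman matrix described after Definition \ref{adrankdef}. Under the ad-rank condition this subspace is all of $\mathfrak{g}$, so $\hat d\Phi_0$ is surjective. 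Since $\hat d\Phi_0$ differs from the ordinary derivative $d\Phi_0$ only by composition with the linear isomorphisms appearing in the definition of $\hat d$, surjectivity of $\hat d\Phi_0$ is equivalent to surjectivity of $d\Phi_0$, so $\Phi$ is a submersion at $(0,\ldots,0)$, and the submersion theorem yields $e\in\mathrm{int}(\mathcal{R}_n)$.

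Finally, local controllability at $e$ follows from the symmetry between reachable and controllable sets already recorded: from $e\in\mathrm{int}(\mathcal{R}_n)\subset\mathrm{int}(\mathcal{R})$, Remark \ref{openess1} gives $e\in\mathrm{int}(\mathcal{C})$, and intersecting a neighborhood of $e$ contained in $\mathcal{R}$ with one contained in $\mathcal{C}$ produces a neighborhood of $e$ whose points are both reachable from $e$ and able to reach $e$. The main obstacle is the derivative computation in the second paragraph: one must verify carefully that holding all but one block of controls at $0$ annihilates every factor except one, so that the partial derivatives assemble cleanly into the Kalman-type span $\sum_{j}\bpsi^{j}(W_0)$; once this is in place, the ad-rank hypothesis plugs in directly through Cayley--Hamilton and the rest is an application of the submersion theorem.
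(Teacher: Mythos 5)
Your proposal is correct and follows essentially the same route as the paper: both realize $\mathcal{R}_n$ via the map $(x_0,\ldots,x_{n-1})\mapsto F(x_0)\psi(F(x_1))\cdots\psi^{n-1}(F(x_{n-1}))$, compute the block partial derivatives at the origin as $\bpsi^{j}\circ\hat dF_0$, identify the image of the derivative with the smallest $\bpsi$-invariant subspace containing $\mathrm{im}(\hat dF_0)$, and conclude by the submersion theorem. Your added justifications (Cayley--Hamilton for truncating the invariant span at $\bpsi^{n-1}$, the equivalence of surjectivity of $\hat d\Phi_0$ and $d\Phi_0$, and deriving local controllability from Remark \ref{openess1}) merely make explicit steps the paper leaves implicit.
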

\begin{proof}
Let $V$ be the smallest $\bpsi$ invariant subspace containing the image of $\hat dF_0$. Then $V$ is spanned by the set $\{\bpsi^i\circ\hat dF_0(x);x\in\mathbb{R}^m,i\in\{0,1,\ldots,n-1\}\}$.

Define the map
$G:U^n\rightarrow G$ by
\begin{equation*}
    G(x_0,x_1, \ldots,x_{n-1})=\varphi(e,u_{\mathbf{x}},n)=F(x_0)\psi(F(x_1))\psi^2(F(x_2))\cdots\psi^{n-1}(F(x_{n-1})), 
\end{equation*}
where $u_{\mathbf{x}}$ is defined by
$$u_{\mathbf{x}}(k)=\left\{\begin{array}{l}
x_k;\text{ if }k\in\{0,1,2, \ldots,n-1\}\\
0;\text{ otherwise}
\end{array}\right.$$

Then $G(0,0,\ldots,0)=e$, and the image of $G$ is contained in $\mathcal{R}_n$. Furthermore, if $(0,0,\ldots,x_i,\ldots,0)$ is a vector in $(\mathbb{R}^m)^n$ (each coordinate is a vector of $\mathbb{R}^m$) where the nonzero entry $x_i$ is the $i$-th entry, then
$\hat dG_{(0,0,\ldots,0)}(0,0,\ldots,x_i,\ldots,0)=$ $\bpsi^i\circ\hat dF_0(x_i).$
Thus, the image of $\hat dG_{(0,0,\ldots,0)}$ coincides with $W$. If the hypothesis $W=\mathfrak{g}$ is satisfied, then by the submersion theorem $e=G(0,0,\ldots,0)\in int(img(G))\subset int(\mathcal{R}_n)$.
\end{proof}

If we denote
$$X_i=\frac{\partial}{\partial x_i}F(0)=\left.\frac{d}{dt}\right|_{t=0}\phi(e,te_i,1)$$
where $e_i\in\mathbb{R}^m$ denotes the vector $(0,0,\ldots,1,\ldots,0)$ where only the $i-th$ coordinate is nonzero, then the space from the Theorem above coincides with
$${\rm span}\{\bpsi^k(X_i);i\in\{1,2,\ldots,m\},k\in\{0,1,\ldots,n-1\}\}.$$

\begin{remark}
    Since $F(0)=e$, then the image of $\hat dF_0$ coincides with the image of $dF_0$ if one considers $\mathfrak{g}=T_eG$. Thus, the Theorem above can equivalently be stated as: If the smallest $\bpsi$ invariant subspace containing the image of $dF_0$ coincides with $\mathfrak{g}$ then $e\in \hbox{int}(\mathcal{R}_n)$.
\end{remark}

The hypothesis of the previous Lemma can be calculated as follows: the smallest $\bpsi$ invariant subspace containing the image of $\hat dF_0$ coincides with the image of the matrix
$$
    \mathcal{V} = \begin{pmatrix}
    \bpsi^{n-1}\hat dF_0 & \bpsi^{n-2}\hat dF_0 & \cdots & \hat dF_0
\end{pmatrix}.$$
Thus, if the matrix above has full rank then the system is locally controllable in $e$.

Finally, to ensure the importance of the ad-rank condition for discrete-time linear systems, we have the following criteria for the existence of control sets.

\begin{proposition}Consider a linear system (\ref{discreteLinearSystem}) defined over the Lie group $G$ satisfying the ad-rank condition. Then, there exists a control set $D$, with non-empty interior such that $D = \overline{\mathcal{R}}\cap \mathcal{C}$. 
\end{proposition}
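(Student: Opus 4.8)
The plan is to exhibit $D=\overline{\mathcal R}\cap\mathcal C$ itself as the desired control set, checking the three defining properties together with nonemptiness of its interior. The role of the ad-rank condition is to supply openness: by Theorem \ref{adrank} it gives $e\in\hbox{int}(\mathcal R_n)\subseteq\hbox{int}\mathcal R$, so $\mathcal R$ is open by Proposition \ref{reachablesetprop}(6), and Remark \ref{openess1} then yields that $\mathcal C$ is also open with $e\in\hbox{int}\mathcal C$. Since $e\in\mathcal R\cap\mathcal C\subseteq D$ (both $\mathcal R$ and $\mathcal C$ contain $e$ via the constant control $0$), and the open set $\mathcal R\cap\mathcal C$ is contained in $D$, the interior of $D$ is nonempty and contains $e$.

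I would verify property (2) first. Given $x\in D$ and any $y\in D$, I write $y\in\overline{\mathcal R}$ as a limit of points $\varphi(k_j,e,u^j)\in\mathcal R$; since $x\in\mathcal C$ there are $k_0,u^0$ with $\varphi(k_0,x,u^0)=e$, and the concatenation property of $\varphi$ lets me realize each $\varphi(k_j,e,u^j)$ as $\varphi(k_0+k_j,x,w^j)\in\mathcal R(x)$. Passing to the limit gives $y\in\overline{\mathcal R(x)}$, so $D\subseteq\overline{\mathcal R(x)}$ for every $x\in D$. For property (1), I would exploit that $\overline{\mathcal R}$ is forward invariant: Proposition \ref{reachablesetprop}(5) gives $\varphi(k,\mathcal R,u)\subseteq\mathcal R$, and continuity of each diffeomorphism $\varphi(k,\cdot,u)$ extends this to $\overline{\mathcal R}$. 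Fixing $x\in D$, I choose $u^0,k_0$ steering $x$ to $e$; the intermediate points $x_j=\varphi(j,x,u^0)$ lie in $\overline{\mathcal R}$ by forward invariance and in $\mathcal C$ by the cocycle identity $\varphi(k_0-j,x_j,\Theta_j u^0)=e$, hence in $D$. After reaching $e$ I switch to the constant control $0$ (note $F(0)=f_0(e)=e$), which keeps the trajectory at $e\in D$ forever; concatenating the two controls produces the required $u$ with $\varphi(k,x,u)\in D$ for all $k$.

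The decisive step is maximality (property (3)), and it is here that openness of $\mathcal C$ is essential. Let $\tilde D\supseteq D$ satisfy properties (1) and (2); I must show $\tilde D\subseteq D$. Take $y\in\tilde D$. Applying property (2) of $\tilde D$ at the point $e\in D\subseteq\tilde D$ gives $\tilde D\subseteq\overline{\mathcal R(e)}=\overline{\mathcal R}$, so $y\in\overline{\mathcal R}$. Applying property (2) of $\tilde D$ at the point $y$ gives $e\in\tilde D\subseteq\overline{\mathcal R(y)}$, i.e. $e$ is only approximately reachable from $y$. To upgrade this to $y\in\mathcal C$ I use $e\in\hbox{int}\mathcal C$: choosing a neighborhood $N\subseteq\mathcal C$ of $e$, and since $e\in\overline{\mathcal R(y)}$, there is $z\in\mathcal R(y)\cap N$; concatenating a trajectory from $y$ to $z$ with one from $z$ to $e$ (available because $z\in\mathcal C$) shows $y\in\mathcal C$. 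Hence $y\in\overline{\mathcal R}\cap\mathcal C=D$, so $\tilde D=D$ and $D$ is maximal.

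The main obstacle is precisely this last upgrade from approximate to exact controllability in the maximality argument: the abstract properties (1)--(2) of a control set only deliver membership in closures of reachable sets, whereas the conclusion $D=\overline{\mathcal R}\cap\mathcal C$ requires honest membership in $\mathcal C$. The ad-rank hypothesis resolves exactly this by forcing $e$ into the interior of $\mathcal C$ (via Theorem \ref{adrank} and Remark \ref{openess1}), so an approximately reachable target can always be nudged into the genuinely controllable set $\mathcal C$. The remaining verifications are routine concatenations via the cocycle and semigroup properties of $\varphi$ together with the forward invariance of $\overline{\mathcal R}$.
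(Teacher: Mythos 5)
Your proof is correct, but it follows a genuinely different route from the paper's. The paper does not verify the control-set axioms for $\overline{\mathcal{R}}\cap\mathcal{C}$ directly: after using the ad-rank condition (via Theorem \ref{adrank} and Remark \ref{openess1}, exactly as you do) to get $\mathcal{R}$ and $\mathcal{C}$ open, it takes an open neighborhood $V$ of $e$ with $V\subset\mathcal{R}\cap\mathcal{C}$, observes that $V$ is completely controllable and hence satisfies properties 1 and 2, invokes the existence of a maximal control set $D\supset V$ (so $e\in\hbox{int}D$ and the system is accessible), and then cites \cite[Proposition 6]{CCS2} for the identification $D=\mathcal{C}\cap\overline{\mathcal{R}}$. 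You instead prove that identification from scratch: your verification of property 2 by concatenating $x\to e$ with trajectories approximating $y$, of property 1 via forward invariance of $\overline{\mathcal{R}}$ (Proposition \ref{reachablesetprop}, items 5) plus the cocycle identity for the intermediate points and the constant control $0$ fixing $e$, and of maximality by using $e\in\hbox{int}\mathcal{C}$ to upgrade $e\in\overline{\mathcal{R}(y)}$ to $y\in\mathcal{C}$, together constitute a self-contained re-proof of the cited external result in this setting. What the paper's route buys is brevity, at the cost of resting on \cite{CCS2}; what yours buys is a complete argument inside the paper that also isolates precisely where the ad-rank hypothesis acts (only through openness of $\mathcal{R}$ and $\mathcal{C}$, i.e.\ the passage from approximate to exact controllability in the maximality step). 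The only points worth polishing are bookkeeping: in property 2 you should note that the concatenation property is applied with both times positive, and in property 1 that the endpoint case $j=k_0$ gives $x_{k_0}=e\in D$; neither affects correctness.
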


\begin{proof} If $\mathcal{R}$ is open, by the Remark (\ref{openess1}), $\mathcal{C}$ is open. Now, as $e \in \mathcal{R} \cap \mathcal{C}$, there is a neighborhood $V$ of $e$ such that $e \in V \subset \mathcal{R} \cap \mathcal{C}$. Then, the neighborhood $V$ is completely controllable. This proves that $V$ satisfies the properties $1$ and $2$ of the definition of control sets. Hence there is a control set $D$ such that $V \subset D$. As $V$ is open, $e \in \hbox{int}D$. This also proves that $e \in \hbox{int}\mathcal{C}$. As $\mathcal{R}$ is open, the system (\ref{discreteLinearSystem}) is accessible. By \cite[Proposition 6]{CCS2}, $D$ has the form $D = \mathcal{C} \cap \overline{\mathcal{R}}$. 
    
\end{proof}

\subsection{Examples}

\begin{example} Take  $G = {\rm Sl}(2,\R)$ the semisimple connected Lie group of $2 \times 2$ matrices with real entries  and determinant $1$. Denote its Lie algebra by $\mathfrak{sl}(2,\R)$. 
It is known that $\hbox{Aut}(\mathfrak{sl}(2,\R)) = \hbox{Inn}(\mathfrak{sl}(2,\R))$, that is, every automorphism of $\mathfrak{sl}(2,\R)$ is inner in the sense of if $T \in \hbox{Aut}(\mathfrak{sl}(2,\R))$, there are $Y_1, \ldots,Y_n \in \mathfrak{gl}(2,\R)$ such that 
$    T(X) = e^{\adj{Y_1}} \cdots e^{\adj{Y_{n}}}(X)$. By \cite[Proposition 5.15]{sanmartin1}, we have that the conjugation $C_h(g) = h g h^{-1}$ has as differential at the identity the function $T$ above, where $h = e^{Y_1} \cdots e^{Y_n} \in {\rm Gl}(2,\R)$.

Then we can define the class of linear systems of ${\rm Sl}(2,\R)$. In fact, given a $h \in {\rm Gl}(2,\R)$, consider a map $f: U \times {\rm Sl}(2,\R) \longrightarrow {\rm Sl}(2,\R)$ given by 
\begin{equation*}
    f_u(g) = 
    \begin{bmatrix}
        f^{11}_u(e) & f^{12}_u(e)\\
        f^{21}_u(e) & f^{22}_u(e)
    \end{bmatrix} hgh^{-1}
\end{equation*}
such that $f^{11}_0(e) = f^{22}_0(e) = 1$, $f^{21}_0(e) = f^{12}_0(e) = 0$ and $f^{11}_u(e)f^{22}_u(e) - f^{21}_u(e)f^{12}_u(e)=1$ for all $u \in U$. Considering the discrete-time system
\begin{equation} \label{discretelinearSL}
     x_{k+1} = f_{u_k}(x_k), k \in \N_0,
\end{equation}
it is not hard to prove that the system (\ref{discretelinearSL})  is a linear system on ${\rm Sl}(2,\R)$. In particular, every linear system in ${\rm Sl}(2,\R)$ has the form of the system (\ref{discretelinearSL}) given the product property in the definition of linear systems in the introduction.

Now, take the matrices $h, h^{-1} \in {\rm Gl}(2,\R)$ as
\begin{equation*}
    h= 
    \begin{bmatrix}
    h_{11} & h_{12}\\
    h_{21} & h_{22}
    \end{bmatrix} \hbox{ and }
    h^{-1} = \frac{1}{h_{11}h_{22} - h_{21}h_{12}}
    \begin{bmatrix}
    h_{22} & -h_{21}\\
    -h_{12} & h_{11}
    \end{bmatrix}, 
\end{equation*}
and a element $g \in {\rm Sl}(2,\R)$ in the form $g = \begin{bmatrix}
    g_{11} & g_{12}\\
    g_{21} & g_{22}
    \end{bmatrix}$. We have 
\begin{equation*}
    f_u(g) = 
    \begin{bmatrix}
        f^{11}_u(e) & f^{12}_u(e)\\
        f^{21}_u(e) & f^{22}_u(e)
    \end{bmatrix} 
    f_0(g),
\end{equation*}
where $f_0(g) = h g h^{-1}$ is given by
\begin{equation*}
     f_0(g) = 
    \begin{bmatrix}
       \frac{g_{12} h_{11} h_{21} + g_{22} h_{12} h_{21} - g_{11} h_{11} h_{22} - g_{21} h_{12} h_{22}}{h_{12} h_{21} - h_{11} h_{22}} & \frac{(g_{12} h_{11}^2 - h_{12} (g_{11} h_{11} - g_{22} h_{11} + g_{21} h_{12}))}{(-h_{12} h_{21} + h_{11} h_{22})}\\ 
       \frac{(-g_{12} h_{21}^2 + h_{22} (g_{11} h_{21} - g_{22} h_{21} + g_{21} h_{22}))}{(-h_{12} h_{21} + h_{11} h_{22})} & \frac{(g_{12} h_{11} h_{21} + g_{22} h_{11} h_{22} - h_{12} (g_{11} h_{21} + g_{21} h_{22}))}{(-h_{12} h_{21} +h_{11} h_{22})}
    \end{bmatrix}
\end{equation*}  

Also, considering the function $f_0$ as a function in the form $f_0: \R^4 \longrightarrow \R^4$, we have that  
\begin{equation*}
    df_0 = 
    \left[
\begin{array}{cccc}
    -\frac{h_{11} h_{22}}{h_{12} h_{21}-h_{11} h_{22}} & \frac{h_{11} h_{21}}{h_{12} h_{21}-h_{11} h_{22}} & -\frac{h_{12} h_{22}}{h_{12} h_{21}-h_{11} h_{22}} & \frac{h_{12} h_{21}}{h_{12} h_{21}-h_{11} h_{22}} \\
    -\frac{h_{11} h_{12}}{h_{11} h_{22}-h_{12} h_{21}} & \frac{h_{11}^2}{h_{11} h_{22}-h_{12} h_{21}} & -\frac{h_{12}^2}{h_{11} h_{22}-h_{12} h_{21}} & \frac{h_{11} h_{12}}{h_{11} h_{22}-h_{12} h_{21}} \\
    \frac{h_{21} h_{22}}{h_{11} h_{22}-h_{12} h_{21}} & -\frac{h_{21}^2}{h_{11} h_{22}-h_{12} h_{21}} & \frac{h_{22}^2}{h_{11} h_{22}-h_{12} h_{21}} & -\frac{h_{21} h_{22}}{h_{11} h_{22}-h_{12} h_{21}} \\
    -\frac{h_{12} h_{21}}{h_{11} h_{22}-h_{12} h_{21}} & \frac{h_{11} h_{21}}{h_{11} h_{22}-h_{12} h_{21}} & -\frac{h_{12} h_{22}}{h_{11} h_{22}-h_{12} h_{21}} & \frac{h_{11} h_{22}}{h_{11} h_{22}-h_{12} h_{21}} \\
\end{array}\right]
\end{equation*}

Let us  explore some numeric examples. Take the case when $U \subset \R$ is a compact convex neighborhood of $0$ and 
\begin{equation}\label{matrixh}
    h = 
    \begin{bmatrix}
        1&1\\
        0&1
    \end{bmatrix}, 
\end{equation}
with 
\begin{equation}\label{fu}
    f_u(e) = 
    \begin{bmatrix}
        1+u&-u\\
        u&1-u
    \end{bmatrix}.
\end{equation}

Let be 
\begin{equation} \label{discretelinearSL2}
     g_{k+1} = f_{u_k}(g_k), k \in \N_0,
\end{equation}
the linear system defined by the function above. Considering the $2-$step nilpotent matrix
\begin{equation*}
    M = \begin{bmatrix}
        0 & 1 \\
        0 & 0
    \end{bmatrix}. 
\end{equation*}
we get that 
\begin{equation}
    e^M = \sum_{ n \in \N} \frac{M^n}{n!} = I + M = 
    \begin{bmatrix}
        1&1\\
        0&1
    \end{bmatrix} = h 
\end{equation}

At first, the function $f_0$ is defined by
\begin{equation*}
    f_0(g_{11},g_{12}, g_{21}, g_{22}) = \left(g_{11}+g_{21} ,-g_{11}+g_{12}-g_{21}+g_{22}, g_{21}, g_{22}-g_{21}
    \right)
\end{equation*}
with derivative given by 
\begin{equation}\label{df_0}
    df_0 = \left[
    \begin{array}{cccc}
        1 & 0 & 1 & 0 \\
        -1 & 1 & -1 & 1 \\
        0 & 0 & 1 & 0 \\
        0 & 0 & -1 & 1 \\
    \end{array}\right]
\end{equation} 

Let us check the accessibility of the system (\ref{discretelinearSL2}). It is simple to verify that $f_u(e) \in {\rm Sl}(2,\R),$ for every $u \in U$. Following the definition (\ref{regular}), we claim that $e \in \hat{\mathcal{R}}$. In fact, take $(u,v,w) \in \hbox{int}U^3$. For $k=3$, using the notation $f_{u,v,w} = f_{u} \circ f_{v} \circ f_{w}$, we have 
\begin{equation*}
    f_{u,v,w}(e) = f_{u}(e) C_h (f_{v}(e)) C_{h^2}(f_{w}(e))
\end{equation*}

The matrix above is given by

\resizebox{\linewidth}{!}{%
$\displaystyle
    f_{u,v,w}(e)=\left[    
    \begin{array}{cc} 
        (w+1)((u+1) (2 v+1)-u v)&  (-u (1 - 2 v) - 4 (1 + u) v) + (-u v + (1 + u) (1 + 2 v))\\
        ((1 - u) v + u (1 + 2 v)) (1 + w) & ((1 - u) (1 - 2 v) - 4 u v) + ((1 - u) v + u (1 + 2 v))
    \end{array} \right]
    $}
where $f_{u,v,w} = f_{u} \circ f_{v} \circ f_{w}$, whose derivative is given by 
\begin{equation}
    \frac{\partial }{\partial(u,v,w)} f_{u} \circ f_{v} \circ f_{w}(e) =  
    \begin{bmatrix}
        \begin{bmatrix}
            (1+v)(1+w)\\
            (2 + u) (1 + w)\\
            1+ 2v + u (1+ v)
        \end{bmatrix} &
        \begin{bmatrix}
            -v\\ -2-u\\
            0
        \end{bmatrix}\\
        \begin{bmatrix}
            (1+v)(1+w)\\ 
            (1+u)(1+w)\\ 
            v + u (1+v)
        \end{bmatrix} &
        \begin{bmatrix}
           -v\\ -1-u\\ 0
        \end{bmatrix}
    \end{bmatrix}.
\end{equation}

Taking the vectors of the matrix above, one can prove that the subspace generated by them is $3-$dimensional. Hence, the matrix above has rank $3$, for every $u \in \hbox{int}U$. Then $e \in \hat{\mathcal{R}}_3 \subset \hat{\mathcal{R}}$. As the set $\hat{\mathcal{R}}$ is open and $\hat{\mathcal{R}} \subset \mathcal{R}$, we have $e \in \hbox{int}\mathcal{R}$. By the Proposition (\ref{reachablesetprop}), item 6, the set $\mathcal{R}$ is open, which also implies that the system is accessible. 

Now, we will prove that the subalgebra $\mathfrak{h}$ from Theorem (\ref{accessibilityCondition}) coincides with $\mathfrak{sl}(2,\R)$.  In fact, considering the function $F: U \longrightarrow {\rm Sl}(2,\R)$ defined by $F(u) = f_u(e)$ as in (\ref{fu}), we first need to find the set 
\begin{equation*}
    W = Span\{\hat{d}F_u(X): u \in U, X \in \R\}. 
\end{equation*}
where
\begin{equation*}
    \hat{d}F_u(X) = d(R^{s}_{F(u)^{-1}})_{F(u)} \circ d(F)_u \circ d(R^{r}_u)_e(X). 
\end{equation*}and $R^s$ and $R^r$ are the respective right-translations in the Lie groups ${\rm Sl}(2,\R)$ and $\R$. As a matter of fact, we have 
\begin{equation*}
    d(R^{s}_{X})_{Y}(Z) = ZX. 
\end{equation*}

Also, as the right-invariant vector fields in $\R$ are the constant fields, we get that 
\begin{equation}\label{dF_u}
    \hat{d}F_u(X) = X\begin{bmatrix}
        1 & -1 \\
        1 & -1
    \end{bmatrix}\cdot \begin{bmatrix}
        1-u & u \\
        -u & 1+u
    \end{bmatrix} = X \begin{bmatrix}
        1 & -1 \\
        1 & -1
    \end{bmatrix}. 
\end{equation}

Therefore 
\begin{equation*}
    W = \left\{k\begin{bmatrix}
        1 & -1\\
        1 & -1
    \end{bmatrix}\in \mathfrak{sl}(2,\R): k \in \R\right\}. 
\end{equation*}
whose dimension is 1. Consider the Kalman matrix 
\begin{equation*}
    \mathcal{K} = (df_0^2 X_0 \hbox{ }df_0 X_0 \hbox{ }X_0), 
\end{equation*}
with $X_0 = \begin{bmatrix}
    1 & -1\\
    1 & -1
\end{bmatrix}$ and $df_0$ defined in (\ref{df_0}). We get that 
\begin{equation}\label{Kalmanmatrix}
    \mathcal{K} = \begin{bmatrix}
        3 & 2 & 1\\
        -9 & -4 & -1\\
        1 & 1 & 1\\
        -3 & -2 & -1
    \end{bmatrix}. 
\end{equation}
and using some concepts of linear algebra, it is easy to show that $rank(\mathcal{K}) = 3$. As $\mathfrak{h}$ is the smallest subalgebra containing the image of $\mathcal{K}$ with dimension $3$, we get $\mathfrak{h} = \mathfrak{sl}(2,\R)$ and hence, the system is accessible. 

Now, for the the ad-rank condition from Theorem (\ref{adrank}), the vector $$X_0 = \begin{bmatrix}
    1 & -1\\
    1 & -1
\end{bmatrix}$$ is the matrix of $\hat{d}F_0$ by the expression in (\ref{dF_u}). Therefore, when we generates the matrix $\mathcal{V} = (\bpsi^2 \hat{d}F_0 \hbox{ }\bpsi \hat{d}F_0 \hbox{ }\hat{d}F_0),$ it coincides with the matrix $\mathcal{K}$ in (\ref{Kalmanmatrix}), whose rank is $3$, which is exactly the dimension of $\mathfrak{sl}(2,\R)$, implying local controllability. 
\end{example}

\begin{example}The real abelian solvable Lie groups of dimension $2$ are $\R^2$, $\mathbb{T} \times \R$ and $\mathbb{T}^2$ (see e.g. \cite{onish}). In the non-abelian case, the unique (up to an isomorphism) real solvable lie group is the open half plane $G = \R^{+} \ltimes \R$, endowed with the product 
	\begin{equation*}
		(x_1,y_1) \cdot (x_2,y_2) = (x_1x_2, y_2 + x_2 y_1). 
	\end{equation*}
	
	The Lie group $(G, \cdot)$ is called affine group and denoted by $\hbox{Aff}(2,\R)$. The Lie algebra of $\hbox{Aff}(2,\R)$ is given by the set $\R^2$, endowed with the Lie bracket 
\begin{equation*}
    [(x,y),(z,w)] = (0,xw - yz) 
\end{equation*}
which is known as affine Lie algebra, denoted by $\mathfrak{aff}(2,\R)$. 

In particular, the automorphisms of $\hbox{Aff}(2,\R)$ are given by 
\begin{equation}\label{automorphismsaff}
	\phi(x,y) = (x,a(x-1) + dy),
\end{equation}
with  $ d \in \R\setminus \{0\}$ and $a \in \R$. Then, the linear systems on $\hbox{Aff}(2,\R)$ can be defined by the functions 
\begin{equation}\label{sist2dim}
	f((x,y), u):=f_u(x,y) = (h(u)x, a(x-1) + dy + g(u)x),
\end{equation}
where $h: \R^m \rightarrow \R^+$ and $g: \R^m \rightarrow \R$ are $\mathcal{C}^{\infty}$ maps satisfying $h(0)=1$ and $g(0)=0$. Consider the linear system
\begin{equation}\label{systemOnAff}
	 x_{k+1} = f(x_k, u_k),\ k \in \N,\ u\in U,
\end{equation}
where  $U$ is assumed to be a compact and convex neighborhood of $0\in\mathbb{R}^m$ and $f$ has the form in (\ref{sist2dim}). Considering the function $F: U \longrightarrow \hbox{Aff}(2,\R)$ defined by $F(u) = f_u(1,0)$, we get $F(u) = (h(u), g(u))$. If $R^a_{(x,y)}$ is the right-translation map of $\hbox{Aff}(2,\R)$, we obtain
\begin{equation*}
    R^a_{(x,y)}(z,w) = (z,w)\cdot (x,y) = (zx, y + xw),
\end{equation*}
which also implies
\begin{equation*}
    d(R^a_{(x,y)})_{(z,w)} = 
    \begin{bmatrix}
        x & 0 \\
        0 & x
    \end{bmatrix}. 
\end{equation*}

The fact of $(h(u),g(u))^{-1} = (\frac{1}{h(u)}, -\frac{g(u)}{h(u)})$, we get that
\begin{eqnarray*}
    \hat{d}F_u(X) &=& d(R^a_{F(u)^{-1}})_{F(u) }\circ d(F)_u \circ d(R^r_u)_e(X)\\
    &=& \begin{bmatrix}
        \frac{1}{h(u)} & 0 \\
        0 & \frac{1}{h(u)}
    \end{bmatrix}\cdot \begin{bmatrix}
        Xh'(u) \\
        Xg'(u)
    \end{bmatrix}\\
    &=&X\begin{bmatrix}
        \frac{h'(u)}{h(u)} \\
        \frac{g'(u)}{h(u)}
    \end{bmatrix}
\end{eqnarray*}

Consider the case $h(u) = e^u$, $g(u) = u^2$ and $U = [-1,1]$. Then
\begin{equation*}
    \hat{d}F_u(X) = \begin{bmatrix}
        X \\
        2Xue^{-u}
    \end{bmatrix}
\end{equation*}

Therefore, the vector subspace $W$ can be spanned by the vectors $v_1 = (1,0)$ and $v_2 = (1,2)$, which imples that $\dim W  = 2$. Hence, the Kalman matrix $\mathcal{K} = (df_0 B \hbox{ }B)$ with $B = \begin{bmatrix}
    1 & 1 \\
    0 & 2
\end{bmatrix}$ has rank $2$ and thus, $\mathfrak{h} = \mathfrak{aff}(2,\R)$. One can notice that in this case, there is no need to construct the subalgebra $\mathfrak{h}$, given that $\dim W = \dim \mathfrak{aff}(2,\R)$.

Let us verify the hypothesis for the ad-rank condition. As a matter of fact, considering that $\hat{d}F_0 = \begin{bmatrix}
    1\\
    0
\end{bmatrix}$, we have that 
\begin{equation*}
    \mathcal{V} = (\bpsi \hat{d}F_0 \hbox{ }\hat{d}F_0) = 
    \begin{bmatrix}
       1 & 1 \\
       a & 0
    \end{bmatrix}
\end{equation*}

Thus, the ad-rank condition from theorem (\ref{adrank}) is satisfied if, and only if, $a \neq 0$. As all the automorphism of $\hbox{Aff}(2,\R)$ has the form in (\ref{automorphismsaff}), if we consider $f_0$ as a inner automorphism and $d= 1$, it follows by \cite[Theorem 79]{TMC} that the system $(\ref{systemOnAff})$ is controllable.  
\end{example}

\begin{example}Consider the Heisenberg group 
\begin{equation*}
    \mathbb{H}=\left\{\begin{bmatrix}
			1 & x_2 & x_1\\
			0 & 1 & x_3\\
			0 & 0 & 1
		\end{bmatrix};\ x_1,x_2, x_3\in\mathbb{R}\right\},
\end{equation*}
which is diffeomorphic to the Euclidean space $\mathbb{R}^{3}$ endowed with the product
\begin{equation*}
    (x_{1},x_{2},x_{3})\cdot(y_{1},y_{2},y_{3})=(x_{1}+y_{1}+x_{2}y_{3},x_{2}+y_{2},x_{3}+y_{3}).
\end{equation*}

Also, $\mathbb{H}$ is a Lie group with Lie algebra 
\begin{equation*}
    \mathfrak{g} = \left\{X \in \mathfrak{gl}(3,\R): X = \begin{bmatrix}
        0 & x & y\\
        0 & 0 & z\\
        0 & 0 & 0
    \end{bmatrix}, (x,y,z) \in \R^3\right\}. 
\end{equation*}
endowed with the matrix Lie bracket. 

Let $U$ be a compact and connected neighborhood of $0\in\mathbb{R}$ and $f:\mathbb{H}\times U\rightarrow\mathbb{H}$ given by
\begin{equation*}
    f_{u}(x_{1},x_{2},x_{3})=\left(x_{1}+x_{2}+\dfrac{x_{2}^{2}}{2}+ux_{2}+ux_{3}-\dfrac{u}{2}-\dfrac{u^{2}}{3},x_{2}+u,x_{2}+x_{3}+\dfrac{u}
		{2}\right).
\end{equation*}

Then $f_0: \mathbb{H} \longrightarrow \mathbb{H}$ is defined by 
\begin{equation}\label{f_0H}
    f_0(x_1,x_2,x_3) = \left(x_1+x_2+\frac{x_2^2}{2}, x_2,x_2+x_3\right)
\end{equation}
and $F:U \longrightarrow \mathbb{H}$ by 
\begin{equation*}
    F(u) = \left(-\frac{u}{2} - \frac{u^2}{3}, u,\frac{u}{2}\right)
\end{equation*}

It is not hard to check that 
\begin{eqnarray}\label{heisenberg}
	g_{k+1}=f_{u_k}(g_{k}), \ u_k\in U
\end{eqnarray}
is a linear system on $\mathbb{H}$. Let us prove the accessibility property from Theorem (\ref{howToApplyToCompact}), that is, $\mathfrak{h} = \mathfrak{g}$. At first, the right-translation in $\mathbb{H}$ is given by the function
\begin{equation*}
    R^h_{(y_1,y_2,y_3)}(x_1,x_2,x_3) = (x_1 + y_1 + x_2y_3, x_2 + y_3, x_3 + y_3). 
\end{equation*}
whose derivative matrix is given by 
\begin{equation*}
    d(R^h_{(y_1,y_2,y_3)})_{(x_1,x_2,x_3)} = \begin{bmatrix}
        1 & y_3 & 0\\
        0 & 1 & 0\\
        0 & 0 & 1
    \end{bmatrix}. 
\end{equation*}

Also, it is easy to check that 
\begin{equation*}
    F(u)^{-1} = \left(\frac{u}{2} + \frac{5u^2}{6}, -u,-\frac{u}{2}\right). 
\end{equation*}
for every $u \in U$. Then, we get 
\begin{equation}\label{dF_uH}
    \hat{d}F_u(X) = \begin{bmatrix}
        1 & -\frac{u}{2} & 0 \\
        0 & 1 & 0\\
        0 & 0 & 1
    \end{bmatrix} \cdot \begin{bmatrix}
        -\frac{1}{2} - \frac{2}{3}u\\
        1 \\
        \frac{1}{2}
    \end{bmatrix} X
    = X\begin{bmatrix}
        -\frac{1}{2}-\frac{7u}{6}\\
        1 \\
        \frac{1}{2}
    \end{bmatrix}
\end{equation}

Therefore, the spanned subspace $W$ is given by  
\begin{equation*}
    W = Span\left\{X\begin{bmatrix}
        -\frac{1}{2}-\frac{7u}{6}\\
        1 \\
        \frac{1}{2}
    \end{bmatrix} \in \R^3: u \in U, X \in \R\right\}. 
\end{equation*}
which one can prove that $\dim W = 2$. Now,
considering the expression for $f_0$ in (\ref{f_0H}), we get 
\begin{equation}\label{df_0H}
    d(f_0)_{(0,0,0)}^n = \begin{bmatrix}
        1 & n & 0\\
        0 & 1 & 0\\
        0 & n & 1
    \end{bmatrix}, \forall n \in \N. 
\end{equation}
and also, taking $U = [-1,1]$ and $B = \begin{bmatrix}
    -\frac{1}{2} & 0\\
    1 & 1\\
    \frac{1}{2} & \frac{1}{2}
\end{bmatrix}$, the Kalman matrix $\mathcal{K} = (df_0^2 B \hbox{ }df_0 B \hbox{ }B)$ is given by 
\begin{equation*}
    \mathcal{K} = \begin{bmatrix}
        \frac{3}{2} & 2 & \frac{1}{2} & 1 & -\frac{1}{2} & 0\\
        1 &1 &1 &1 &1 &1\\
        \frac{5}{2} & \frac{5}{2} & \frac{3}{2} & \frac{3}{2} & \frac{1}{2} & \frac{1}{2}
   \end{bmatrix}
\end{equation*}

It is not hard to show that $rank(\mathcal{K}) = 3$, which implies that $\mathfrak{h} = \mathfrak{g}$. Therefore, the system is accessible. 

Let us check the conditions for Theorem (\ref{adrank}). In fact, by the expression (\ref{dF_uH}) we have that $\hat{d}F_0 = [-\frac{1}{2} \hbox{ }1 \hbox{ }\frac{1}{2}]^T$. Also, using the expression (\ref{df_0H}), we get  
\begin{equation*}
    \mathcal{V} = 
    \begin{bmatrix}
        \frac{3}{2} & \frac{1}{2}& -\frac{1}{2}\\
        1 & 1 & 1\\
        \frac{5}{2} & \frac{3}{2} & \frac{1}{2}
    \end{bmatrix}
\end{equation*}
which implies that $rank(\mathcal{V}) = 3$ as well. Therefore, the system $(\Sigma)$ is locally controllable. 

In particular, the fact of $\mathcal{R}$ is open and every eigenvalue of $d(f_0)_{(0,0,0)}$ are equal $1$, it follows by \cite[Theorem 69]{TMC} that $G \subset \mathcal{R} \cap \mathcal{C}$, which implies that the system (\ref{heisenberg}) is controllable. This reinforces the hypothesis that $e \in \hbox{int}\mathcal{R}$ is absolutely important to our environment.  
\end{example}

\section{Conclusion}

This paper presents an algebraic condition for the accessibility of discrete-time linear systems on Lie groups using a novel notion of the derivative of the function $u \mapsto f(u,e)$. Additionally, we establish the \textit{ad-rank condition} for these systems, inspired by the continuous case. Under this condition, we demonstrate the existence of a control set $D$ with a non-empty interior such that $D = \mathcal{C} \cap \overline{\mathcal{R}}$. We illustrate our findings with examples of systems on the affine two-dimensional Lie group $\hbox{Aff}_2(\R)$, the special matrix Lie group $\hbox{SL}(2,\R)$ and the three-dimensional Heisenberg group, all of which meet the stated accessibility and ad-rank conditions. This work builds on prior studies of continuous-time systems and extends crucial concepts to the discrete-time domain, thus bridging an essential gap in the literature on Lie group systems.

The evidence suggests that this paper makes a substantial contribution to the field of discrete-time linear systems on Lie groups by proving key accessibility and ad-rank properties. Future research could explore the robustness of these properties under perturbations or extend the analysis to more complex Lie group structures. By establishing these accessibility and ad-rank properties, our work provides a foundational understanding that can be applied to various engineering and computational problems where such systems are prevalent

\end{document}